\newtheorem{theorem}{Theorem}[section]
\newtheorem{lemma}[theorem]{Lemma}
\newtheorem{corollary}[theorem]{Corollary}
\theoremstyle{definition}
\newtheorem{remark}[theorem]{Remark}
\date{}
\def\bar{\overline}
\DeclareMathOperator{\diag}{diag}
\begin{document}
\title{\textbf{Signless Laplacian spectral analysis of a class of graph joins}\footnote{This
	research is supported the National Natural Science Foundation of China (No.~12361070) and the the Ministry
	of Science, Technological Development, and Innovation of the Republic of Serbia (No.~451-03-136/2025-03/200104).  Emails: yejiachang12@163.com (J. Ye), zstanic@matf.bg.ac.rs (Z. Stani\'c), jgqian@xmu.edu.cn (J. Qian).}}
\author{\small Jiachang Ye$^{1}$,
 Zoran Stani\' c$^{2}$, Jianguo Qian$^{1, 3}$ \\\small  $^1$ School of Mathematical Sciences, Xiamen University,  Xiamen, 361005, China\\\small $^{2}$ Faculty of Mathematics, University of Belgrade,
 Studentski trg 16, 11 000 Belgrade, Serbia
\\\small  $^3$ School of Mathematics and Statistics, Qinghai Minzu University,  Xining, 810007, China
} \maketitle

\begin{abstract} A graph is said to be determined by its signless Laplacian spectrum (abbreviated as DQS) if no other non-isomorphic graph shares the same signless Laplacian spectrum. In this paper, we establish the following results:
	\begin{itemize}
		\item[(1)] Every graph of the form $K_1 \vee (C_s \cup qK_2)$, where $q \ge 0$, $s \ge 3$, and the  number of vertices is at least $16$, is DQS;
		\item[(2)] Every graph of the form $K_1 \vee (C_{s_1} \cup C_{s_2} \cup \cdots \cup C_{s_t} \cup qK_2)$, where $t \ge 2$, $q \ge 0$, $s_i \ge 3$, and the  number of vertices is at least $52$, is DQS.
	\end{itemize}
	Here, $K_n$ and $C_n$ denote the complete graph and the cycle of order $n$, respectively, while $\cup$ and $\vee$ represent the disjoint union and the join of graphs. Moreover, the signless Laplacian spectrum of the graphs under consideration is computed explicitly.
	
\begin{flushleft}
\textbf{Keywords:} $Q$-spectrum, Spectral determination, Join, Complete graph, Cycle.  \\
\textbf{MSC 2020:}  05C50.\\
\end{flushleft}
\end{abstract}

\section{Introduction}

Let $G = (V, E)$ be a finite simple undirected graph.  The number of vertices in $G$ is called the \emph{order} of $G$, denoted by $n(G)$ (or simply $n$). The number of edges is referred to as the \emph{size} of $G$, denoted by $m(G)$ (or $m$).

We use the notation $K_n$ and $C_n$ to denote, respectively, the complete graph and the cycle on $n$ vertices.

For two graphs $G$ and $H$, their \emph{disjoint union} is denoted by $G \cup H$. The disjoint union of $q$ copies of a graph $G$ is denoted by $qG$. Furthermore, the \emph{join} of $G$ and $H$, denoted by $G \vee H$, is the graph obtained from $G \cup H$ by adding all edges between every vertex of $G$ and every vertex of $H$.

Let $D(G)$ and $A(G)$ be the diagonal matrix of vertex degree sequence and the adjacency matrix of $G$, respectively.   The  \textit{signless Laplacian matrix} of $G$ is
$Q(G)=A(G)+D(G)$. Its eigenvalues are called  \textit{signless Laplacian eigenvalues}, and they form the \textit{signless Laplacian spectrum} of $G$. For brevity, the foregoing notions will be referred to as the \emph{$Q$-eigenvalues} and the \emph{$Q$-spectrum}, respectively. Moreover, when the context is clear, the prefix $Q$- will be omitted.

Two graphs are said to be \emph{$Q$-cospectral} if they share the same $Q$-spectrum. In this context, a graph $G$ is said to be \emph{determined by its signless Laplacian spectrum} (abbreviated as~\emph{DQS}) if no other non-isomorphic graph has the same $Q$-spectrum as $G$.

Identifying graphs that are, or are not, determined by the spectrum of a prescribed graph matrix is one of the oldest and most extensively studied problems in spectral graph theory. To the best of our knowledge, this line of inquiry dates back to the mid-1950s, when G\"{u}nthard and Primas first explored it in the context of chemical applications~\cite{GP}. Since then, the problem has evolved into a central topic in the  entire theory.

An experienced reader will readily acknowledge that determining whether a given graph is uniquely identified by its spectrum remains a challenging problem, even for graphs with seemingly simple structures. For foundational results and general developments, we refer the reader to~\cite{3,4}, while comprehensive treatments of DQS graphs can be found in~\cite{towI,towII}. Specific families of DQS graphs have been investigated in~\cite{CSS,Liu2,Wang-Friendship,Ye2024two,Ye2025DAM} and references therein. For a thorough survey and further discussion on spectral determination, see~\cite{18,towI,towII}.

In this paper we prove the following theorems.

 \begin{theorem}\label{12t} Every graph $K_1\vee (C_s\cup qK_2)$, with $q\ge 0$, $s\ge 3$, and at least $16$ vertices, is DQS.
\end{theorem}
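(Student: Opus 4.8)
The plan is to compute the $Q$-spectrum of $G=K_1\vee(C_s\cup qK_2)$ explicitly and then show that any graph $H$ with the same $Q$-spectrum is isomorphic to $G$. For the spectrum I would use the equitable partition $\Pi=\{\{v_0\},V(C_s),V(qK_2)\}$, where $v_0$ is the apex: each cell has a constant degree towards each cell, so the divisor matrix of $Q(G)$ with respect to $\Pi$ is a $3\times3$ nonnegative matrix (a $2\times2$ one when $q=0$) whose characteristic polynomial $\varphi$ is an explicit cubic. One checks $\varphi(n)<0<\varphi(n+2)$, so the largest eigenvalue $q_1(G)$ lies in $(n,n+2)$, while the remaining roots of $\varphi$ lie in $(1,5)$. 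The other $n-3$ eigenvalues come from eigenvectors vanishing on $v_0$: such a vector restricts to an eigenvector of $A(C_s)$ on the cycle cell, contributing $3+2\cos(2\pi k/s)$ for $k=1,\dots,s-1$, and on the matching cell it is either antisymmetric inside each $K_2$ (contributing the eigenvalue $1$ with multiplicity $q$) or constant inside each $K_2$ while summing to zero across the copies (contributing the eigenvalue $3$ with multiplicity $q-1$). Listing these proves the final ``moreover'' claim. The key consequences to record are: $q_2(G)<5$; the least eigenvalue equals $1$ if $q\ge1$ or $s$ is even, and $3-2\cos(\pi/s)>1$ if $q=0$ and $s$ is odd; and $0$ is never an eigenvalue.

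Now let $H$ be $Q$-cospectral with $G$. Comparing $\operatorname{tr}(Q)$ and $\operatorname{tr}(Q^2)$ gives $n(H)=n$, $\sum_i d_i(H)=2m(G)$ and $\sum_i d_i(H)^2=\sum_i d_i(G)^2$; since for the members of our family (with $n$ fixed) both $\sum d_i$ and $\sum d_i^2$ are affine in $s$, this pins down $s$ and hence $q$. Because $0$ is not a $Q$-eigenvalue, $H$ has no bipartite component, so every component of $H$ contains an odd cycle and in particular has at least three vertices.

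The core of the argument is to show that $H$ is connected with a unique vertex $w$ of degree $n-1$ and that $H-w$ has degree sequence $(2^{(s)},1^{(2q)})$. First, $q_2(H)=q_2(G)<5$ together with the standard inequality $q_2(H)\ge d_2(H)-1$ (for the second-largest degree) shows that $H$ has at most one vertex of degree $\ge6$. Plugging this into the identities for $\sum d_i(H)$ and $\sum d_i(H)^2$ (using $\sum_{i\ge2}d_i(H)^2\le5\sum_{i\ge2}d_i(H)$ and bounding the number of triangles via $\operatorname{tr}(Q^3)$) forces the largest degree $d_1(H)$ to be within a bounded additive error of $n-1$; the finitely many remaining values of $d_1(H)$, and the possibility that $H$ is disconnected, are then eliminated using the rest of the spectrum --- a proper nonbipartite component on at most four vertices is $K_3$, $K_4$, $K_4-e$ or the paw, the last two having a $Q$-eigenvalue below $1$ and $K_4$ having $q_1=6>q_2(G)$, contradicting the spectrum of $G$, while an extra $K_3$ is ruled out by matching the multiplicities of the eigenvalues $4$ and $1$. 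Once $d_1(H)=n-1$ is established with apex $w$, the degree sums of orders $1,2,3$ of $H-w$ are all determined by the spectrum, and three linear equations in the degree-count variables force every degree of $H-w$ into $\{1,2\}$ with exactly $s$ twos and $2q$ ones. Hence $H-w=P_{a_1}\cup\cdots\cup P_{a_q}\cup C_{b_1}\cup\cdots\cup C_{b_r}$ with $a_i\ge2$, $b_j\ge3$ and $\sum_i(a_i-2)+\sum_j b_j=s$.

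It remains to show $H-w=C_s\cup qK_2$. From $\operatorname{tr}(Q^3)$ and the now-known degree sequence the number of triangles of $H$ is determined; comparing with $G$ gives that $H-w$ has a triangle component only when $s=3$, in which case the budget equation already forces $H-w=C_3\cup qK_2$. For $s\ge4$ I would push the higher trace identities $\operatorname{tr}(Q^k)$, which --- combined with the degree sequence and triangle count --- constrain and ultimately determine the multiset of cycle lengths of $H-w$ (equivalently, one argues from the cone eigenvalue structure: the $Q$-eigenvalues of $H$ are the numbers $1+\mu$ with $\mu$ in the spectrum of $Q(H-w)$, each multiplicity lowered by one at a main eigenvalue, plus the roots of the apex's rational equation, and matching this with $\operatorname{Spec}(Q(G))$, which already contains the whole Galois orbit of $3+2\cos(2\pi/s)$, pins down the structure). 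The decisive comparison is the number of $(s+1)$-cycles: it equals $s$ for $G$, and together with $\sum_i(a_i-2)+\sum_j b_j=s$ this forces $H-w$ to contain exactly one cycle, necessarily $C_s$, and every path component to be $K_2$. Therefore $H-w=C_s\cup qK_2$ and $H\cong G$, so $G$ is DQS. I expect the main obstacle to be the third step: the elementary spectral estimates only give $d_1(H)=n-O(1)$, so squeezing the maximum degree up to exactly $n-1$ and eliminating the residual (in particular disconnected) configurations needs a careful case analysis, which is presumably where the hypothesis $n\ge16$ is used.
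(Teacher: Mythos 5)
Your opening steps (the equitable-partition computation of $S_Q(G)$, the use of $T_1,T_2$ to fix the parameters, and the trace identities forcing the degrees of $H-w$ into $\{1,2\}$ with $s$ twos and $2q$ ones) match the paper's Lemma~\ref{l3.1} and Lemma~\ref{l4.1}. But there are two genuine gaps, one of which you flag yourself and one of which you do not.

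First, the step from ``$d_1(H)$ is within a bounded additive error of $n-1$'' to ``$H$ is connected with $d_1(H)=n-1$ exactly'' is not a routine case check: your inequality $q_2(H)\ge d_2(H)-1$ only gives $d_2(H)\le 5$, and the second-moment bound only gives $d_1(H)\gtrsim n-7$; eliminating the intermediate values needs the sharper estimate $\sigma_1\le d_1+3$ (valid only under degree hypotheses, Lemma~\ref{23l}) together with a separate argument excluding $d_1=n-2$. The paper imports all of this as a packaged prior result (Lemma~\ref{d1-lem}), and this is precisely where $n\ge16$ enters; your sketch of ``small nonbipartite components'' does not close it.

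Second, and more seriously, your endgame does not work. Once $H-w$ is known to be a disjoint union of paths and cycles with the prescribed degree sequence, you propose to pin down the partition into cycle lengths and path lengths by (i) higher trace identities $\operatorname{tr}(Q^k)$ and (ii) the number of $(s+1)$-cycles. Neither is a usable invariant: the count of $(s+1)$-cycles is not extractable from the $Q$-spectrum for unbounded $s$ (the traces count weighted closed semi-edge walks, and isolating a single long cycle count would require controlling all other subgraph contributions), and your parenthetical ``cone eigenvalue'' matching is exactly the hard direction you are trying to prove. In particular nothing in your argument rules out, say, $K_1\vee(C_3\cup C_{s-3}\cup qK_2)$ or a cone over a long path. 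The paper's resolution is a different idea entirely: an eigenvalue-perturbation lemma (Lemma~\ref{PathCycle-lem}) showing that rolling a path component of the cone into a cycle strictly increases $\sigma_1$, combined with the observation (Corollary~\ref{coro3.4}) that $\sigma_1$ of $K_1\vee(\text{cycles}\cup qK_2)$ depends only on $n$ and $q$ — so any surviving path of length $\ge 4$ forces $\sigma_1(H)<\sigma_1(G)$ — together with the fact that a $P_3$ component forces $\sigma_n(H)<1$ (Lemma~\ref{P3-lem}); the ``at most one cycle'' restriction comes from $\sigma_2(H)<5$ via interlacing, which you also do not use. You would need to supply an argument of this kind to complete the proof.
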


 \begin{theorem}\label{13t} Every graph  $K_1\vee (C_{s_1}\cup C_{s_2}\cup\cdots \cup C_{s_t}\cup qK_2)$, with  $t\ge 2$, $q\ge 0$, $s_i\ge 3$ $(1\le i \le t)$, and at least $52$ vertices, is DQS.
\end{theorem}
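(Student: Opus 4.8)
The plan is to proceed via the standard "spectral invariants + structure" strategy, bootstrapping from Theorem \ref{12t}. First I would record the $Q$-spectrum of $G=K_1\vee(C_{s_1}\cup\cdots\cup C_{s_t}\cup qK_2)$ explicitly (this is promised in the abstract and presumably done in an earlier section), and then extract from the first few spectral moments the standard invariants of any $Q$-cospectral mate $H$: namely $n(H)=n(G)$, $m(H)=m(G)$, $\sum d_i^2$, and the number of closed walks of length $3$ (equivalently $\sum d_i^2$ together with the quantity $\sum_{i}d_i^2 + $ edge count of the subdivision, giving access to the number of triangles). From the degree sequence of $G$ — one vertex of degree $n-1$, then $2t$ vertices of degree $4$ coming from the apexes of the cycles' neighbours... more precisely the universal vertex has degree $n-1$, every cycle vertex has degree $3$, every $K_2$-vertex has degree $3$ — so $G$ has exactly one vertex of degree $n-1$ and $n-1$ vertices of degree $3$. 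The key numeric identities $\sum d_i = 2m$ and $\sum d_i^2$ then pin down, for the cospectral mate $H$, that its degree sequence is extremely constrained; I would argue (using that $n\ge 52$ forces the dominant eigenvalue to be large and isolated, via interlacing with the star $K_{1,n-1}$) that $H$ must also have a vertex of degree $n-1$.

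Second, once $H=K_1\vee H'$ for some graph $H'$ on $n-1$ vertices, I would use the well-known relation between the $Q$-spectrum of a cone $K_1\vee H'$ and the spectra of $H'$. The $Q$-eigenvalues of $G$ other than the largest split into a part governed by the (signless Laplacian / adjacency) spectrum of the disjoint union of cycles and edges. Since $C_s\cup qK_2$ and each $C_{s_i}$ is regular of degree $\le 2$, the relevant eigenvalues are $2+2\cos(2\pi j/s_i)$ for the cycles and $0,2$ (with appropriate multiplicities, shifted) for the $K_2$'s and isolated structure, all increased by the contribution of the cone. Comparing these eigenvalue lists forces $H'$ to be a disjoint union whose components are regular of degree $2$ or are copies of $K_2$, i.e. $H'$ is a union of cycles and edges and possibly isolated vertices; counting vertices and edges then rules out isolated vertices and fixes the total cycle length and the number $q$. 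The remaining freedom is only in how the total "cycle mass" is partitioned into individual cycles $C_{s_1},\dots,C_{s_t}$.

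Third — and this is where I expect the main obstacle — I must show that the multiset of cosines $\{2+2\cos(2\pi j/s_i): 1\le j\le s_i,\ 1\le i\le t\}$, shifted by the cone contribution, determines the multiset $\{s_1,\dots,s_t\}$. This is a number-theoretic statement about sums of roots of unity: two multisets of cycle lengths with the same total and the same combined eigenvalue multiset must coincide. The eigenvalue $2+2\cos(2\pi/s_i)$ (the second-largest for a single cycle) is strictly decreasing in $s_i$, so the largest such value identifies the smallest cycle length and its multiplicity; peeling off and inducting should work, but care is needed because eigenvalues from different cycles can collide (e.g. $C_4$ and $C_6$ share the eigenvalue corresponding to $\cos(\pi/2)=0$ region, etc.), and the cone shift is common to all of them. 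I would handle this by the classical argument on vanishing sums of roots of unity (or by comparing characteristic polynomials factored over cyclotomic polynomials): the product $\prod_i \prod_{j}(x - 2 - 2\cos(2\pi j/s_i))$ factors into cyclotomic-type polynomials, and unique factorization recovers each $s_i$ with multiplicity. The hypothesis $n\ge 52$ (versus $16$ in the one-cycle case) enters precisely to guarantee that the dominant eigenvalue is separated far enough from the cone-shifted cycle band that no "accidental" merging with the top eigenvalue can occur and that $H$ cannot instead be a cone over a graph with a different coarse structure (e.g. a cone over something containing a longer path or a vertex of degree $2$ that is not on a cycle). Finally, assembling these three steps, $H\cong G$, completing the proof. \oversign
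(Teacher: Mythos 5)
Your overall scaffolding (spectral moments to constrain the degree sequence, identification of a dominating vertex, recovery of the cycle lengths from the eigenvalue multiset) matches the paper's strategy, and your third step --- peeling off the largest value $3+2\cos(2\pi/s_i)$ to recover the cycle lengths one at a time --- is essentially the paper's Subcase~1.1. However, there are two genuine gaps. First, ``interlacing with the star $K_{1,n-1}$'' does not yield a vertex of degree $n-1$ in the cospectral mate $H$. The paper first obtains $\widetilde{d}_1\ge n-2$ from $\sigma_1(H)>n$ together with the bound $\sigma_1\le d_1+3$ of Lemma~\ref{23l}, and then must \emph{exclude} $\widetilde{d}_1=n-2$; this is done in Lemma~\ref{l5.1} by combining the moment system with a lower bound on $\mathrm{mul}_H(1)$ coming from pendant-type vertices sharing a single neighbour (Lemma~\ref{eigenvalue1-lem}). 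That case analysis is precisely where the threshold $n\ge 52$ originates --- not, as you conjecture, from separating the dominant eigenvalue from the cone-shifted ``cycle band.'' Your proposal contains no mechanism for ruling out $\widetilde{d}_1=n-2$.

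Second, and more seriously, your assertion that $H-v_1$ must consist of cycles and copies of $K_2$ is unjustified and is in fact the crux of the whole proof. The degree sequence only forces $H-v_1$ to be a disjoint union of cycles together with exactly $q$ paths, each on at least two vertices; counting vertices and edges cannot exclude paths on three or more vertices, since such configurations produce identical degree data. The paper eliminates them by two separate eigenvalue arguments: if some path has at least four vertices, an edge-rotation argument on the Perron eigenvector (Lemma~\ref{PathCycle-lem} combined with Corollary~\ref{coro3.4} and Remark~\ref{PathCycle-r}) shows $\sigma_1(H)<\sigma_1(G)$; if the shortest non-$K_2$ path is $P_3$, a quotient-matrix computation (Lemma~\ref{P3-lem}) together with edge-deletion interlacing (Lemma~\ref{26l}) shows $\sigma_n(H)<1=\sigma_n(G)$. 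Relatedly, the ``well-known relation between the $Q$-spectrum of a cone and the spectrum of $H'$'' that you invoke holds only for regular $H'$, which is not available until after the path components have been excluded. Without substitutes for these steps your argument does not close.
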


It should be noted that the latter theorem does not generalize the former, due to the additional assumption on the order of the graph. The latter result can be regarded as a natural extension of those established in~\cite{20,Liu21}, which address the $Q$-spectral determination of joins between an isolated vertex and collections of vertex-disjoint cycles. Moreover, it generalizes the findings on the $Q$-spectral determination of the friendship graph presented in~\cite{Wang-Friendship}. Furthermore, Theorems~\ref{12t} and \ref{13t} are closely connected to the results in~\cite{MH3,Liu11,Liu1,sage,Ye2025DAM,Wang11,Zhou11}, which explore related graph products.

The proofs are carried out in a more general setting, wherein the $Q$-spectra of the graphs under consideration are explicitly computed. In particular, several auxiliary results are established in a form that also accommodates the case where the cycles appearing in the statements degenerate into two parallel edges.

The remainder of the paper is organized as follows. Section~\ref{sec:ir} introduces additional terminology and notation, as well as several known results. In Section~\ref{sec3}, we compute the $Q$-spectrum of the graphs under consideration and present auxiliary results concerning relationships among $Q$-eigenvalues. The proofs of Theorems~\ref{12t} and~\ref{13t} are provided in Sections~\ref{sec4} and~\ref{sec5}, respectively.

\section{Preliminaries}\label{sec:ir}

We write $N_{G}(v)$   and $d_{G}(v)$  to denote the set of neighbours of a vertex $v$
 and the degree of  the same vertex in a graph $G$, respectively. For a vertex subset $X\subset V$,  $G[X]$  denotes the subgraph induced by $X$. The graphs obtained by deleting edge $e$  and vertex $v$ of $G$ are denoted by $G-e$ and   $G-v$, respectively. The  $Q$-eigenvalues of graph $G$ of order $n$ are denoted by $$\sigma_{1}(G)\ge \sigma_{2}(G)\ge \cdots\ge \sigma_{n}(G).$$ Since $Q(G)$ is positive semidefinite, it holds $\sigma_n(G)\ge 0$. Moreover, the equality occurs if and only if $G$ has a bipartite component~\cite[Theorem~1.18]{Book-Stanic}. The $Q$-spectrum of $G$ is denoted by $S_Q(G)$; of course, it is considered as a multiset.

Henceforth, $\mathrm{mul}_{G}(\lambda)$ denotes the multiplicity of the eigenvalue~$\lambda$ in~$S_Q(G)$, while $\lambda^{(q)}$ denotes either $q$ copies of the real number $\lambda$, or a vector of length $q$ with all entries equal to $\lambda$, depending on the context.

 For a non-negative integer  $q$, the sum $$T_q(G)=\sum_{i=1}^{n}  \sigma_{i}^{q}(G)$$ is the $q$th  spectral moment of the signless Laplacian, or, equivalently, the $q$th $Q$-spectral moment of~$G$.

 We adopt a convention that  the degree $d_i(G)$  is attained by a vertex $v_i$ $(1\leq i\leq n)$ of $G$. In this context, $n_q(G)$  denotes the number of vertices of degree $q$ (for short, $q$-vertices) in  $V(G)\setminus \{v_1\}$, that is, $$n_q(G)=|\{u\,:\, u\in V(G)\setminus \{v_1\}~\text{and}~d_G(u)=q\}|,$$
where $v_1$ is a vertex attaining the maximum degree. Unless otherwise stated, the graph argument in the preceding notations will be omitted when no ambiguity arises.

In this section, as well as in Sections~\ref{sec4} and~\ref{sec5}, we assume that the vertex degrees of a graph are arranged as  $d_n\leq d_{n-1}\leq \cdots\leq d_1$.

The discussion continues with a selection of results from linear algebra, some of which are well known.
 They are drawn from \cite{BroSpe,Book-AGT,Book-Stanic}. Modifying the previous notation, let $\lambda_{i}(B)$, $1\le i\le n$, denote  the $i$th largest eigenvalue of an $n\times n$ real symmetric matrix~$B$.

  \begin{lemma}\label{lambda1-lem} Let $B$ be an $n \times n$ real symmetric matrix. Then its largest eigenvalue satisfies
  	\[
  	\lambda_1(B) = \max_{\| \boldsymbol{x} \| = 1} \boldsymbol{x}^\intercal B \boldsymbol{x},
  	\]
  	where $\boldsymbol{x} \in \mathbb{R}^n$ and $\| \cdot \|$ denotes the Euclidean norm.
\end{lemma}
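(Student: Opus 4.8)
The plan is to invoke the spectral theorem for real symmetric matrices and reduce the claim to a weighted-average estimate. First I would fix an orthonormal basis $\boldsymbol{u}_1, \ldots, \boldsymbol{u}_n$ of $\mathbb{R}^n$ consisting of eigenvectors of $B$, say with $B\boldsymbol{u}_i = \lambda_i(B)\boldsymbol{u}_i$ for $1 \le i \le n$; such a basis exists precisely because $B$ is symmetric. Since the unit sphere $\{\boldsymbol{x}\in\mathbb{R}^n : \|\boldsymbol{x}\| = 1\}$ is compact and the map $\boldsymbol{x} \mapsto \boldsymbol{x}^\intercal B \boldsymbol{x}$ is continuous, the supremum on the right-hand side is actually attained, so writing $\max$ is legitimate.

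Next, for an arbitrary unit vector $\boldsymbol{x}$ I would expand $\boldsymbol{x} = \sum_{i=1}^n c_i \boldsymbol{u}_i$, where $c_i = \boldsymbol{u}_i^\intercal \boldsymbol{x}$ and $\sum_{i=1}^n c_i^2 = \|\boldsymbol{x}\|^2 = 1$ by orthonormality. A direct computation then gives
\[
\boldsymbol{x}^\intercal B \boldsymbol{x} = \sum_{i=1}^n \lambda_i(B)\, c_i^2 \le \lambda_1(B) \sum_{i=1}^n c_i^2 = \lambda_1(B),
\]
where the inequality uses $\lambda_i(B) \le \lambda_1(B)$ for every $i$ together with $c_i^2 \ge 0$. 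Taking the maximum over all unit vectors yields $\max_{\|\boldsymbol{x}\|=1} \boldsymbol{x}^\intercal B \boldsymbol{x} \le \lambda_1(B)$.

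For the reverse inequality I would simply evaluate the quadratic form at the eigenvector $\boldsymbol{x} = \boldsymbol{u}_1$: since $\|\boldsymbol{u}_1\| = 1$ and $B\boldsymbol{u}_1 = \lambda_1(B)\boldsymbol{u}_1$, we obtain $\boldsymbol{u}_1^\intercal B \boldsymbol{u}_1 = \lambda_1(B)\,\boldsymbol{u}_1^\intercal\boldsymbol{u}_1 = \lambda_1(B)$, so the maximum is at least $\lambda_1(B)$. Combining the two bounds gives the asserted equality. There is essentially no obstacle here, this being a classical Rayleigh--Ritz statement; the only point that warrants an explicit word is the existence of a maximizer, which is dispatched by the compactness observation above, while everything else is a one-line consequence of the spectral decomposition.
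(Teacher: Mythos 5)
Your proof is correct and is the standard Rayleigh--Ritz argument: the paper itself states this lemma without proof, citing it as a known result from the linear-algebra literature, and the spectral-decomposition argument you give is exactly the canonical one that those references use. Nothing further is needed.
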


\begin{lemma}\label{PFtheorem}  An irreducible non-negative $n\times n$  matrix $B$ has a real positive eigenvalue $\lambda_1$, such that the following statements hold:
\begin{itemize}
	\item[(i)] $|\lambda_i|\leq \lambda_1$ holds for all other eigenvalues
	$\lambda_i$, $2\leq i\leq n$;
	
	\item[(ii)] $\lambda_1$ is a simple root of the characteristic polynomial of $B$;
	
	\item[(iii)]  An eigenvector corresponding to $\lambda_1$  is all-positive.
\end{itemize}	
\end{lemma}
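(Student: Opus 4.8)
The plan is to prove this classical statement (the Perron--Frobenius theorem for irreducible non-negative matrices) via the Collatz--Wielandt characterisation. For a non-zero vector $\boldsymbol{x}\ge \boldsymbol{0}$ set $r(\boldsymbol{x})=\min\{(B\boldsymbol{x})_i/x_i : x_i>0\}$ and put $\lambda_1=\sup r(\boldsymbol{x})$, the supremum taken over all such $\boldsymbol{x}$. Since $r$ is invariant under positive scaling, the supremum may be restricted to the standard simplex $\{\boldsymbol{x}\ge\boldsymbol{0}:\sum_i x_i=1\}$. The first step is to show that this supremum is attained. The function $r$ is not continuous on the whole simplex, but because $B$ is irreducible the matrix $(I+B)^{n-1}$ is entrywise positive, and one checks that $r(\boldsymbol{x})\le r\big((I+B)^{n-1}\boldsymbol{x}\big)$; hence the supremum over the simplex equals the supremum of $r$ over the compact set $\{(I+B)^{n-1}\boldsymbol{x}\}$ of strictly positive vectors, on which $r$ is continuous, so it is attained at some $\boldsymbol{z}>\boldsymbol{0}$.

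The second step is to verify that $B\boldsymbol{z}=\lambda_1\boldsymbol{z}$. If not, then $B\boldsymbol{z}-\lambda_1\boldsymbol{z}\ge\boldsymbol{0}$ is non-zero, so $(I+B)^{n-1}(B\boldsymbol{z}-\lambda_1\boldsymbol{z})>\boldsymbol{0}$; writing $\boldsymbol{w}=(I+B)^{n-1}\boldsymbol{z}>\boldsymbol{0}$ and using $B(I+B)^{n-1}=(I+B)^{n-1}B$, this gives $B\boldsymbol{w}-\lambda_1\boldsymbol{w}>\boldsymbol{0}$, whence $r(\boldsymbol{w})>\lambda_1$, a contradiction. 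Thus $\boldsymbol{z}$ is an eigenvector for $\lambda_1$, which is statement (iii). Moreover $\lambda_1>0$, since $B$ irreducible has no zero row and $\boldsymbol{z}>\boldsymbol{0}$ force $B\boldsymbol{z}>\boldsymbol{0}$. For (i), if $\lambda$ is any eigenvalue with eigenvector $\boldsymbol{y}$, then taking absolute values componentwise in $\lambda y_i=\sum_j B_{ij}y_j$ yields $B|\boldsymbol{y}|\ge |\lambda|\,|\boldsymbol{y}|$, where $|\boldsymbol{y}|$ is the vector of moduli, so $|\lambda|\le r(|\boldsymbol{y}|)\le \lambda_1$.

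The remaining, and most delicate, step is the simplicity in (ii), which I would split into geometric and algebraic multiplicity. Geometric multiplicity $1$: given a real eigenvector $\boldsymbol{y}$ for $\lambda_1$ not proportional to $\boldsymbol{z}$, choose a scalar $t$ so that $\boldsymbol{z}-t\boldsymbol{y}\ge\boldsymbol{0}$ is non-zero with a vanishing coordinate; since $\boldsymbol{z}-t\boldsymbol{y}$ is again a $\lambda_1$-eigenvector, applying $(I+B)^{n-1}$ would make it strictly positive while keeping the zero coordinate, a contradiction, so no such $\boldsymbol{y}$ exists. To upgrade this to algebraic multiplicity $1$, apply everything above to $B^{\intercal}$, which is also irreducible and non-negative, obtaining a strictly positive left eigenvector $\boldsymbol{v}$ with $\boldsymbol{v}^{\intercal}B=\lambda_1\boldsymbol{v}^{\intercal}$. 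If $\lambda_1$ were a multiple root of the characteristic polynomial there would exist $\boldsymbol{u}$ with $(B-\lambda_1 I)\boldsymbol{u}=\boldsymbol{z}$; multiplying on the left by $\boldsymbol{v}^{\intercal}$ gives $0=\boldsymbol{v}^{\intercal}\boldsymbol{z}$, contradicting $\boldsymbol{v}>\boldsymbol{0}$ and $\boldsymbol{z}>\boldsymbol{0}$. Hence $\lambda_1$ is simple. I expect the bookkeeping around the non-continuity of $r$ on the simplex, handled through the passage to $(I+B)^{n-1}$, to be the main technical point; the rest is a short chain of sign comparisons. Since this lemma is standard, in the paper it is quoted from \cite{BroSpe,Book-AGT,Book-Stanic} rather than reproved.
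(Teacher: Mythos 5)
Your argument is correct and complete. Note that the paper does not prove this lemma at all -- it is stated as a known result drawn from \cite{BroSpe,Book-AGT,Book-Stanic} -- so the only meaningful comparison is with the standard literature, and what you have written is a faithful rendition of the classical Collatz--Wielandt proof of the Perron--Frobenius theorem: attainment of $\sup r$ is secured by pushing the supremum onto the compact set of strictly positive vectors $(I+B)^{n-1}\boldsymbol{x}$, positivity of $(I+B)^{n-1}$ converts the slack in $B\boldsymbol{z}\ge\lambda_1\boldsymbol{z}$ into a strict inequality that forces equality, and simplicity is correctly split into geometric multiplicity one (the ``subtract a multiple of $\boldsymbol{y}$ until a coordinate vanishes'' trick) followed by the left-eigenvector pairing that rules out a Jordan chain. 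Two points worth making explicit if you write this up: in choosing $t$ you should first replace $\boldsymbol{y}$ by $-\boldsymbol{y}$ when $\boldsymbol{y}$ has no positive entry and then take $t=\min\{z_i/y_i : y_i>0\}$, and since $\lambda_1$ is real the $\lambda_1$-eigenspace admits a real basis, so restricting to real eigenvectors loses nothing; also, the existence of $\boldsymbol{u}$ with $(B-\lambda_1 I)\boldsymbol{u}=\boldsymbol{z}$ relies on the eigenspace already being known to be spanned by $\boldsymbol{z}$, which your ordering of the two halves of (ii) does supply.
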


Suppose now that the columns of $B$ are indexed by
$X=\{1,2,\ldots,n\}$. For a partition ${X_1,X_2,\ldots,X_k}$, we set
\begin{equation*}
 B=   \begin{bmatrix}
    B_{1,1} & \ldots & B_{1,k}\\
    \vdots & \ddots & \vdots\\
    B_{k,1} & \ldots & B_{k,k}
    \end{bmatrix}
\end{equation*}
where $B_{i,j}$ denotes the block of $B$ formed by the rows in $X_i$ and the columns in $X_j$. If $q_{i,j}$ denotes the average row sum in $B_{i,j}$, then the matrix $N=[q_{i,j}]$ is a \textit{quotient matrix} of $B$. If, for every $i, j$, $B_{i,j}$  has a constant row sum, then the partition is called \textit{equitable}, and  $N$ is refined to \textit{equitable quotient matrix} of $B$.

\begin{lemma}\label{equitable}
Let $B$ be a non-negative irreducible real symmetric matrix, and $N$  an equitable quotient matrix of $B$. If $\lambda$ is an eigenvalue of $N$, then $\lambda$ is also an eigenvalue of  $B$. Moreover, the largest eigenvalues of $B$ and $N$ coincide.
 \end{lemma}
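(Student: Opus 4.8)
The plan is to realize the equitable partition through its characteristic matrix and then to read off both assertions from a single commutation identity, invoking the Perron--Frobenius theorem for the part concerning the largest eigenvalue.

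First I would let $P$ be the $n\times k$ matrix whose $j$th column is the characteristic vector of the cell $X_j$ (so $P_{i,j}=1$ if $i\in X_j$ and $0$ otherwise). Its columns are nonzero with pairwise disjoint supports, so $P$ has full column rank $k$. The defining property of an equitable partition, namely that every block $B_{i,j}$ has constant row sum equal to $q_{i,j}$, is precisely the matrix identity $BP=PN$. Hence, if $Nv=\lambda v$ for some nonzero $v\in\mathbb{R}^k$, then $B(Pv)=P(Nv)=\lambda(Pv)$, and $Pv\neq 0$ because $P$ has full column rank; thus $\lambda$ is an eigenvalue of $B$ with eigenvector $Pv$. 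This proves the first assertion, and in particular every eigenvalue of $N$ is real (being an eigenvalue of the symmetric matrix $B$), so that the spectrum of $N$ is a sub-multiset of that of $B$; consequently $\lambda_1(N)\le\lambda_1(B)$.

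For the reverse inequality I would use that $B$ is non-negative and irreducible. By Lemma~\ref{PFtheorem} the eigenvalue $\lambda_1(B)$ admits an all-positive eigenvector $\boldsymbol{x}$. Transposing $BP=PN$ and using $B^\intercal=B$ gives $P^\intercal B=N^\intercal P^\intercal$, whence
\[
N^\intercal (P^\intercal\boldsymbol{x})=P^\intercal B\boldsymbol{x}=\lambda_1(B)\,(P^\intercal\boldsymbol{x}).
\]
The $i$th entry of $P^\intercal\boldsymbol{x}$ is the sum of the (positive) entries of $\boldsymbol{x}$ over the cell $X_i$, so $P^\intercal\boldsymbol{x}$ is a strictly positive, hence nonzero, vector. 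Therefore $\lambda_1(B)$ is an eigenvalue of $N^\intercal$, and since a square matrix and its transpose have the same characteristic polynomial, $\lambda_1(B)$ is an eigenvalue of $N$. Combined with $\lambda_1(N)\le\lambda_1(B)$ from the previous step, this forces $\lambda_1(N)=\lambda_1(B)$.

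The argument is essentially bookkeeping once the right objects are in place; the only point requiring care, and the one I would single out as the crux, is the passage from the Perron eigenvector of $B$ to an eigenvector of $N$ (equivalently $N^\intercal$), for which the symmetry of $B$ is used to turn $BP=PN$ into $P^\intercal B=N^\intercal P^\intercal$, together with the observation that $P^\intercal\boldsymbol{x}$ remains strictly positive. An alternative route for the second assertion would be Cauchy-type interlacing between the eigenvalues of $N$ and those of $B$, but the Perron--Frobenius argument above is shorter and self-contained given the lemmas already stated.
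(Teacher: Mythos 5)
Your proof is correct. The paper does not prove this lemma at all --- it is quoted from the cited references (\cite{BroSpe,Book-AGT,Book-Stanic}) as a known fact --- so there is no in-paper argument to compare against; your derivation via the characteristic matrix $P$, the identity $BP=PN$, and the transposed identity $P^\intercal B=N^\intercal P^\intercal$ applied to the Perron eigenvector is exactly the standard textbook route, and every step checks out. The only cosmetic remark: your sentence claiming the spectrum of $N$ is a \emph{sub-multiset} of that of $B$ asserts slightly more than your eigenvector argument delivers (it gives containment as sets, not with multiplicities), but only the inequality $\lambda_1(N)\le\lambda_1(B)$ is used, so nothing is affected.
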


The remainder of this section is devoted to specific results concerning $Q$-eigenvalues. Only the statements are presented here; for further details, the reader is referred to the corresponding references.

\begin{lemma}\label{26l} {\rm\cite{Heu1}} If $G$ is a graph of order $n$ $(n\geq 3)$ and $e\in E(G)$, then
	$$\sigma_{1}(G)\ge \sigma_{1}(G-e)\ge \sigma_2(G)\ge \sigma_2(G-e)\ge \cdots\ge \sigma_{n}(G)\ge \sigma_{n}(G-e)\ge0.$$
\end{lemma}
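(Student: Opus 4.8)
The plan is to exhibit $Q(G)$ as a rank-one positive semidefinite perturbation of $Q(G-e)$ and then appeal to Weyl's inequalities. Write $e = uv$. Passing from $G-e$ to $G$ raises each of the two diagonal entries indexed by $u$ and $v$ by one (the degrees of $u$ and $v$ each increase by one), changes the symmetric off-diagonal entries in positions $(u,v)$ and $(v,u)$ from $0$ to $1$, and leaves every other entry unchanged. Hence, writing $\boldsymbol{b} = \boldsymbol{e}_u + \boldsymbol{e}_v \in \mathbb{R}^n$ for the $0$--$1$ indicator vector of $\{u,v\}$, one has
\[
Q(G) = Q(G-e) + \boldsymbol{b}\,\boldsymbol{b}^\intercal ,
\]
where $\boldsymbol{b}\,\boldsymbol{b}^\intercal$ is symmetric, positive semidefinite, and of rank one, with eigenvalues $2, 0, \dots, 0$.

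Next I would apply Weyl's inequalities for the eigenvalues of a sum of two real symmetric matrices to $X = Q(G-e)$ and $Y = \boldsymbol{b}\,\boldsymbol{b}^\intercal$. Choosing the indices so that $Y$ contributes one of its zero eigenvalues on each side, the upper Weyl bound gives $\sigma_{i+1}(G) \le \sigma_i(G-e) + \lambda_2(Y) = \sigma_i(G-e)$ for $1 \le i \le n-1$, while the lower Weyl bound gives $\sigma_i(G) \ge \sigma_i(G-e) + \lambda_n(Y) = \sigma_i(G-e)$ for $1 \le i \le n$. Interleaving these two families of inequalities produces the chain
\[
\sigma_1(G) \ge \sigma_1(G-e) \ge \sigma_2(G) \ge \sigma_2(G-e) \ge \cdots \ge \sigma_n(G) \ge \sigma_n(G-e).
\]
Finally, $\sigma_n(G-e) \ge 0$ because $Q(G-e)$ is positive semidefinite, as recalled in Section~\ref{sec:ir}; this closes off the inequality on the far right and finishes the argument. (The hypothesis $n \ge 3$ plays no essential role in this reasoning; it works verbatim for any $n \ge 2$.)

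An alternative route is to work with the unsigned vertex--edge incidence matrix $R$, for which $Q(G) = RR^\intercal$ and $Q(G-e) = R'(R')^{\intercal}$, where $R'$ is $R$ with the column indexed by $e$ deleted. Since the nonzero spectra of $RR^\intercal$ and $R^\intercal R$ agree, one could transfer the Cauchy interlacing inequalities from the $m \times m$ matrix $R^\intercal R$ to its principal submatrix $(R')^{\intercal} R'$ obtained by deleting the row and column of index $e$. I expect the rank-one perturbation argument to be the cleaner of the two, since the incidence-matrix version forces one to keep track of the multiplicity of the eigenvalue $0$ when the numbers of vertices and edges differ. The step needing the most care is thus not conceptual but bookkeeping: selecting the right pair of indices in Weyl's inequalities so that the perturbation term $\boldsymbol{b}\,\boldsymbol{b}^\intercal$ supplies a zero eigenvalue on the relevant side, and verifying explicitly that $Q(G) - Q(G-e)$ is genuinely a rank-one positive semidefinite matrix rather than relying on the informal intuition that deleting an edge can only decrease the $Q$-eigenvalues.
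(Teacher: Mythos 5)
The paper does not prove this lemma at all: it is imported verbatim from van den Heuvel's paper \cite{Heu1}, and Section~\ref{sec:ir} explicitly states that only the statements of such results are reproduced. Your argument is therefore a self-contained replacement for that citation, and it is correct. The identity $Q(G)=Q(G-e)+\boldsymbol{b}\boldsymbol{b}^\intercal$ with $\boldsymbol{b}=\boldsymbol{e}_u+\boldsymbol{e}_v$ is exactly right (the edge contributes $+1$ to the two diagonal entries and to the two off-diagonal entries indexed by $u,v$, and nothing else), the spectrum of $\boldsymbol{b}\boldsymbol{b}^\intercal$ is $\{2,0^{(n-1)}\}$, and the two instances of Weyl's inequality you invoke --- $\sigma_{i+1}(G)\le\sigma_i(G-e)+\lambda_2(\boldsymbol{b}\boldsymbol{b}^\intercal)$ and $\sigma_i(G)\ge\sigma_i(G-e)+\lambda_n(\boldsymbol{b}\boldsymbol{b}^\intercal)$ --- do interleave into the stated chain, with $\sigma_n(G-e)\ge0$ supplied by positive semidefiniteness. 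Your remark that $n\ge3$ is not needed is also accurate. The incidence-matrix alternative you sketch (interlacing for $(R')^\intercal R'$ as a principal submatrix of $R^\intercal R$, then transferring the nonzero spectrum to $RR^\intercal$) is essentially the classical route and works too, modulo the zero-eigenvalue bookkeeping you flag; either version is acceptable, and the rank-one perturbation argument is indeed the cleaner one.
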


\begin{lemma}\label{21l} {\rm\cite{Liu1}}  For a graph $G$, let $X=\{u_{1},u_{2},\ldots,u_k\} \subset V(G)$ and $H\cong G[X]$. If, for $1\leq j\leq k$, wee have $0\leq q_{j}\leq  d_G(u_{j}),$  then the inequality
	$$\lambda_{i} \big(\diag(q_{1},q_{2},\ldots,q_{k})+A(H)\big)\le  \sigma_{i}(G)$$ holds for  $1\leq i\leq k$.
\end{lemma}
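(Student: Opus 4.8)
The plan is to read the matrix $\diag(q_1,\dots,q_k)+A(H)$ as a diagonal perturbation of a principal submatrix of $Q(G)$ and then exploit the Courant--Fischer characterisation of eigenvalues (Lemma~\ref{lambda1-lem} being its $i=1$ case). Write $M=\diag(q_1,\dots,q_k)+A(H)$, and observe first that the principal submatrix of $Q(G)$ obtained by restricting to the rows and columns indexed by $X=\{u_1,\dots,u_k\}$ is exactly $\diag(d_G(u_1),\dots,d_G(u_k))+A(H)$: the diagonal entries are the $G$-degrees of $u_1,\dots,u_k$, and the off-diagonal entries record precisely those edges of $G$ with both endpoints in $X$, which by definition are the edges of $H=G[X]$. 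Since $0\le q_j\le d_G(u_j)$, the matrix $M$ is dominated on the diagonal by this submatrix, and this is the only comparison we will need.

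Concretely, I would fix $i$ with $1\le i\le k$, let $S\subseteq\mathbb{R}^k$ be an $i$-dimensional subspace attaining
\[
\lambda_i(M)=\max_{\dim S=i}\ \min_{0\ne x\in S}\ \frac{x^\intercal M x}{x^\intercal x},
\]
and lift each $x\in S$ to $\tilde x\in\mathbb{R}^n$ by placing the coordinates of $x$ on the positions of $u_1,\dots,u_k$ and $0$ elsewhere. This lift is injective and linear, so it produces an $i$-dimensional subspace $\tilde S\subseteq\mathbb{R}^n$ with $\|\tilde x\|=\|x\|$. Expanding the Rayleigh quotient of $Q(G)=D(G)+A(G)$ on the supported vector $\tilde x$ gives
\[
\tilde x^\intercal Q(G)\tilde x=\sum_{j=1}^{k} d_G(u_j)\,x_j^2+2\!\!\sum_{u_a u_b\in E(H)}\!\!x_a x_b\ \ge\ \sum_{j=1}^{k} q_j\,x_j^2+2\!\!\sum_{u_a u_b\in E(H)}\!\!x_a x_b=x^\intercal M x,
\]
where the inequality uses $d_G(u_j)\ge q_j\ge 0$ together with $x_j^2\ge 0$. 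Hence $\min_{0\ne \tilde x\in\tilde S}\tilde x^\intercal Q(G)\tilde x/\tilde x^\intercal\tilde x\ge\lambda_i(M)$, and by the Courant--Fischer maximum over all $i$-dimensional subspaces of $\mathbb{R}^n$ the left-hand side is at most $\sigma_i(G)$, which is exactly the asserted inequality $\lambda_i(M)\le\sigma_i(G)$.

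As an alternative route worth recording, one can split the argument into two classical steps: Cauchy interlacing applied to the principal submatrix $\diag(d_G(u_1),\dots,d_G(u_k))+A(H)$ of $Q(G)$ yields $\lambda_i\big(\diag(d_G(u_1),\dots,d_G(u_k))+A(H)\big)\le\sigma_i(G)$, while Weyl's monotonicity theorem applied to the decomposition $M=\big(\diag(d_G(u_1),\dots,d_G(u_k))+A(H)\big)-\diag\big(d_G(u_1)-q_1,\dots,d_G(u_k)-q_k\big)$, in which the subtracted diagonal matrix is positive semidefinite, gives $\lambda_i(M)\le\lambda_i\big(\diag(d_G(u_1),\dots,d_G(u_k))+A(H)\big)$; chaining the two finishes the proof. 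There is no genuine obstacle here; the only points that need care are keeping all eigenvalue orderings consistently non-increasing so that the interlacing and monotonicity inequalities point in the right direction, and verifying cleanly that the edges of $G$ internal to $X$ are exactly the edges of $H$ and nothing more.
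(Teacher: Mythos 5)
Your proof is correct. Note that the paper itself does not prove this lemma: it is quoted from the reference [Liu1] with only the statement given, so there is no in-paper argument to compare against. Both of your routes are sound --- the direct Courant--Fischer argument (zero-padding an optimal $i$-dimensional subspace for $M=\diag(q_1,\dots,q_k)+A(H)$ and using $d_G(u_j)\ge q_j$ to bound the Rayleigh quotient of $Q(G)$ from below by that of $M$) and the two-step factorization into Cauchy interlacing for the principal submatrix $\diag(d_G(u_1),\dots,d_G(u_k))+A(H)$ of $Q(G)$ followed by Weyl monotonicity for the positive semidefinite diagonal perturbation. The latter is the standard textbook decomposition and is essentially how the result is established in the cited source; your identification of the principal submatrix of $Q(G)$ on $X$ and your care with the non-increasing eigenvalue ordering are exactly the points that need checking, and you have handled both correctly.
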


\begin{lemma}\label{d1-d2-lem} {\rm\cite{Liu21}} If $G$ is a graph of order $n\geq 12$ satisfying $\sigma_{1}>n>5\ge \sigma_{2}\ge \sigma_{n}>0$, then $G$ is necessarily connected, along with $d_{1}\geq n-3$  and $d_{2}\leq4$.
\end{lemma}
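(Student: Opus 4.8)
The plan is to deduce all three conclusions from the single estimate $d_2\le 4$, and then to prove that estimate by an interlacing argument based on Lemma~\ref{21l}. First I would record the elementary bound $\sigma_1(G)\le d_1+d_2$, valid for an arbitrary graph: passing to the component $C$ that attains $\sigma_1(G)$ (it has an edge, since $\sigma_1(G)>n$), its Perron eigenvector $\boldsymbol{x}$ is positive by Lemma~\ref{PFtheorem}; choosing a vertex $i$ with $\boldsymbol{x}_i$ largest and a neighbour $j$ of $i$ with $\boldsymbol{x}_j$ largest among the neighbours of $i$, the two eigenvalue equations give $\sigma_1\boldsymbol{x}_i\le d_i\boldsymbol{x}_i+d_i\boldsymbol{x}_j$ and $\sigma_1\boldsymbol{x}_j\le d_j\boldsymbol{x}_j+d_j\boldsymbol{x}_i$, and adding them yields $\sigma_1\le d_i+d_j\le d_1+d_2$ (as $i\ne j$). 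Granting $d_2\le 4$, the hypothesis $\sigma_1>n$ then forces $d_1\ge n-3$. Finally, if $G$ were disconnected, the component of a vertex of degree $d_1$ would contain at least $d_1+1\ge n-2$ vertices, so all the other components together would span at most two vertices and hence be copies of $K_1$ or $K_2$; such a component is bipartite and contributes the eigenvalue $0$, contradicting $\sigma_n>0$ in view of the characterization that $\sigma_n=0$ exactly when $G$ has a bipartite component. Thus $G$ is connected, and only $d_2\le 4$ remains.

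For that, suppose $d_2\ge 5$ and let $v_1,v_2$ be vertices of degrees $d_1\ge d_2$. From $\sigma_1\le d_1+d_2\le 2d_1$ and $\sigma_1>n\ge 12$ we obtain $d_1\ge 7$ and $d_1+d_2>n$; the latter forces $v_1$ and $v_2$ to have a common neighbour, since otherwise $N(v_1)$ and $N(v_2)$ are disjoint and $d_1+d_2\le |V|=n$. Fix such a common neighbour $w$. If $v_1\not\sim v_2$, then $G[\{v_1,w,v_2\}]$ is the path $P_3$ with mid-vertex $w$, and Lemma~\ref{21l} applied with $q_{v_1}=d_1$, $q_w=0$, $q_{v_2}=5$ bounds $\sigma_2$ below by the second eigenvalue of $M=\left(\begin{smallmatrix} d_1&1&0\\ 1&0&1\\ 0&1&5\end{smallmatrix}\right)$; the characteristic polynomial of $M$ equals $(d_1-\lambda)(\lambda^2-5\lambda-1)-(5-\lambda)$, which is negative at $\lambda=5$, positive at $\lambda=d_1$, and tends to $-\infty$ as $\lambda\to\infty$, so $M$ has two eigenvalues exceeding $5$, whence $\sigma_2>5$ — a contradiction. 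Hence $v_1\sim v_2$, so $\{v_1,v_2,w\}$ induces a triangle; applying Lemma~\ref{21l} with $q_{v_1}=d_1$, $q_{v_2}=\min(d_2,6)$ and a suitably chosen $q_w\in[0,d_w]$, the analogous root count for the resulting $3\times 3$ matrix again exhibits an eigenvalue greater than $5$, giving the same contradiction — in all cases except a bounded range of parameters, namely when $d_1$ and $d_2$ are both small; but then $n<\sigma_1\le d_1+d_2$ confines $n$ to a short window just above $12$.

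The main obstacle is precisely this residual range. The interlacing matrices comfortably give only $d_2\le 6$ (e.g.\ from the $2\times 2$ matrix on $\{v_1,v_2\}$, whose second eigenvalue is at least $d_2-1$), and sharpening this to $d_2\le 4$ is delicate because in the boundary configurations the second eigenvalue of the relevant $3\times 3$ matrix lands exactly at $5$; eliminating them seems to require wringing out the remaining slack — bringing in a fourth vertex among the many neighbours of $v_1$, using $\sigma_3\le\sigma_2\le 5$, using the non-bipartiteness implied by $\sigma_n>0$, or directly checking the finitely many graphs of order $12\le n\le 15$ that survive. Everything else is routine linear-algebra bookkeeping.
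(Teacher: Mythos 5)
The paper does not actually prove this lemma --- it is quoted from the cited source \cite{Liu21} --- so there is no internal proof to compare you against; I can only assess your argument on its own terms. The reductions you perform are sound: the bound $\sigma_1\le d_1+d_2$ is correctly derived from the Perron eigenvector of the component attaining $\sigma_1$; once $d_2\le 4$ is granted, $\sigma_1>n$ gives $d_1\ge n-3$; and the connectivity argument (any second component would live on at most two vertices, hence be bipartite, forcing $\sigma_n=0$) is correct. Your treatment of the non-adjacent case of $d_2\ge 5$ is also genuinely closed: the sign pattern $p(5)=5-d_1<0$, $p(d_1)=d_1-5>0$, $p(+\infty)=-\infty$ for $p(\lambda)=\det(M-\lambda I)$ does give $\lambda_2(M)>5$ for $d_1\ge 7$, and Lemma~\ref{21l} then contradicts $\sigma_2\le 5$.

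The genuine gap is the one you flag yourself: the inequality $d_2\le 4$, which is the engine of the whole lemma, is never established. What your interlacing matrices actually exclude is $d_2\ge 6$ (via $\lambda_2\ge d_2-1$ from the $2\times 2$ block, plus the adjacent case $d_2=6$, where $(d_1-5)(d_2-5)\ge 2>1$ gives $\lambda_2>5$) together with the non-adjacent configuration when $d_2=5$. The surviving configuration --- $d_2=5$ with $v_1\sim v_2$ --- really does defeat the triangle matrix: with $q_{v_1}=d_1$, $q_{v_2}=5$ and the only justified choice $q_w=2$ (all one knows is $d_w\ge 2$), one computes $\det(M-5I)=10-d_1$, which is nonnegative for $d_1\le 10$; since $d_1+d_2>n\ge 12$ only forces $d_1\ge 8$, the window $d_1\in\{8,9,10\}$, $n\in\{12,13,14\}$ is untouched (at $d_1=10$ the matrix has $\lambda_2$ exactly $5$, so no strict contradiction is possible from it). Your proposed remedies --- adjoining a fourth vertex, invoking non-bipartiteness, or ``directly checking the finitely many graphs of order $12\le n\le 15$'' --- are not carried out, and the last is not a proof (even after imposing the degree constraints, the number of candidate graphs is far too large for an unexhibited check). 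As it stands the proposal proves the lemma with $d_2\le 6$ in place of $d_2\le 4$, which is too weak for the way the lemma is used later in the paper, where the spectral-moment systems in Lemmas~\ref{l4.1} and~\ref{l5.1} are set up with only $\widetilde{n}_1,\ldots,\widetilde{n}_4$ and thus require $\widetilde{n}_5=\widetilde{n}_6=0$. Closing this residual case requires a genuinely new ingredient, i.e., the actual argument of \cite{Liu21}.
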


\begin{lemma}\label{d1-lem} {\rm\cite{Ye2024two}} Let  $G$ be a  graph of order $n\ge 16$ satisfying $\sigma_{1}(G)>n>5> \sigma_{2}(G)\ge \sigma_{n}(G)>0$.   If  $H$ and $G$ are $Q$-cospectral, then $H$ is necessarily connected, with  $d_{2}(H)\leq 4$ and  $d_1(H)=d_1(G)\in \{n-1,n-2\}$.
\end{lemma}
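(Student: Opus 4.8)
The plan is to reduce everything, via $Q$-cospectrality, to a statement about a single graph, and then to pin down the maximum degree from the spectral radius alone. Since $H$ and $G$ are $Q$-cospectral, $\sigma_i(H)=\sigma_i(G)$ for all $i$; in particular $\sigma_n(H)=\sigma_n(G)>0$, so $H$ satisfies the same hypothesis $\sigma_1(H)>n>5>\sigma_2(H)\ge\sigma_n(H)>0$ as $G$. Because $n\ge 16\ge 12$, Lemma~\ref{d1-d2-lem} applies to $H$ (and equally to $G$) and yields that $H$ is connected, $d_1(H)\ge n-3$ and $d_2(H)\le 4$. Two of the three assertions are thereby settled; moreover $n-3\le d_1(H)\le n-1$ and $n-3\le d_1(G)\le n-1$, so it remains to prove $d_1(H)=d_1(G)$ and to exclude the value $n-3$.

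I would isolate the following auxiliary estimate: \emph{if $F$ is connected with $d_2(F)\le 4$ and $d_1(F)\ge 13$, then $d_1(F)+1\le\sigma_1(F)<d_1(F)+2$.} The lower bound follows from Lemma~\ref{21l} applied with $X=\{v_1\}\cup N_F(v_1)$ (here $v_1$ attains $d_1(F)$) and weights $q_{v_1}=d_1(F)$, $q_u=1$ for $u\in N_F(v_1)$: the matrix $\diag(q_{v_1},q_u,\dots)+A(F[X])$ dominates $Q(K_{1,d_1(F)})$ entrywise, and since $Q(K_{1,d_1(F)})$ is non-negative and irreducible, testing with its positive Perron vector (Lemma~\ref{lambda1-lem}) gives $\sigma_1(F)\ge\lambda_1\big(\diag(q_{v_1},q_u,\dots)+A(F[X])\big)\ge\sigma_1(K_{1,d_1(F)})=d_1(F)+1$. (This is just the familiar bound $\sigma_1\ge\Delta+1$.)

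For the upper bound, which I expect to be the main obstacle, set $\beta=\sigma_1(F)$ and let $\boldsymbol{x}>0$ be a Perron eigenvector of $Q(F)$ (Lemma~\ref{PFtheorem}), normalised so that $x_{v_1}=1$. The eigenequation at $v_1$ gives $\beta-d_1(F)=\sum_{u\sim v_1}x_u>0$, hence $\beta>d_1(F)\ge 13$. Let $\mu=\max_{u\ne v_1}x_u$, attained at $u^\ast$. If $u^\ast\not\sim v_1$, the eigenequation at $u^\ast$ gives $(\beta-d_F(u^\ast))\mu\le d_F(u^\ast)\mu$, so $\beta\le 2d_F(u^\ast)\le 8$, contradicting $\beta>13$; thus $u^\ast\sim v_1$, and then $(\beta-d_F(u^\ast))\mu\le 1+(d_F(u^\ast)-1)\mu$, which with $d_F(u^\ast)\le 4$ yields $\mu\le 1/(\beta-7)$. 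Substituting this into $\beta-d_1(F)=\sum_{u\sim v_1}x_u\le d_1(F)\mu$ gives $(\beta-d_1(F))(\beta-7)\le d_1(F)$, i.e.\ $\beta^2-(d_1(F)+7)\beta+6d_1(F)\le 0$. Since $d_1(F)\ge 11$ we have $d_1(F)^2-10d_1(F)+49<(d_1(F)-3)^2$, so the larger root of this quadratic is smaller than $d_1(F)+2$, whence $\beta<d_1(F)+2$. The whole delicacy is concentrated here, in localising $\sigma_1$ to within a unit interval; the rest is bookkeeping, and one should double-check that the chosen numerical thresholds are compatible with $n\ge 16$.

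Finally I would assemble the pieces. The auxiliary estimate applies to $F=G$ and to $F=H$, since by the first paragraph both are connected with $d_2\le 4$ and $d_1\ge n-3\ge 13$. Thus $d_1(G)$ is the unique integer $d$ with $\sigma_1(G)\in[d+1,d+2)$, and likewise $d_1(H)$ is the unique integer $d$ with $\sigma_1(H)\in[d+1,d+2)$; as $\sigma_1(G)=\sigma_1(H)$, this forces $d_1(H)=d_1(G)$. Moreover, if $d_1(G)=n-3$ then the estimate gives $\sigma_1(G)<(n-3)+2=n-1<n$, contradicting $\sigma_1(G)>n$; hence $d_1(G)\ge n-2$, and together with $d_1(G)\le n-1$ we conclude $d_1(H)=d_1(G)\in\{n-1,n-2\}$, which completes the proof.
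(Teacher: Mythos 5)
Your argument is correct, but note that this paper does not actually prove Lemma~\ref{d1-lem}: the statement is imported verbatim from \cite{Ye2024two}, so there is no in-paper proof to compare against. Your first step (cospectrality transfers the spectral hypotheses to $H$, after which Lemma~\ref{d1-d2-lem} yields connectivity, $d_2\le 4$ and $d_1\ge n-3$ for both graphs) is exactly the intended use of the quoted preliminaries. The genuinely new content is your two-sided localization $d_1+1\le\sigma_1<d_1+2$. The paper's own tool in this direction, Lemma~\ref{23l}, gives only $\sigma_1\le d_1+3$; combined with $\sigma_1>n$ that excludes $d_1=n-3$ but cannot by itself force $d_1(H)=d_1(G)$, since a window of length two can contain a given $\sigma_1$ for two consecutive values of $d_1$. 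Your sharper upper bound closes that gap by making $d_1$ the unique integer with $\sigma_1\in[d_1+1,d_1+2)$. I checked the eigenvector computation: the dichotomy on whether the maximizing non-apex vertex is adjacent to $v_1$, the resulting bound $\mu\le 1/(\beta-7)$, the quadratic $\beta^2-(d_1+7)\beta+6d_1\le 0$, and the threshold $d_1\ge 11$ (satisfied since $d_1\ge n-3\ge 13$) are all sound; the example $K_1\vee G_1$ with $G_1$ cubic has $\sigma_1\approx d_1+1+6/d_1$, so your bound is essentially tight and there is no hidden slack. Two minor remarks: the lower bound $\sigma_1\ge d_1+1$ is the standard fact $\sigma_1\ge\Delta+1$ for a graph with an edge and does not really need the detour through Lemma~\ref{21l}; and in the non-adjacent case you only need $\beta>8$, which already follows from $\beta>d_1\ge 13$, so the case analysis is consistent.
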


\begin{lemma}\label{23l} {\rm\cite{Ye2024two}} Let $G$ be a connected  graph of order $n$, with $d_2\leq 4$. If either $d_1\ge 8>d_n\ge 2$  or $d_1\ge 11>1=d_n$, then $\sigma_1\le d_1+3$.
\end{lemma}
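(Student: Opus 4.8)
The plan is to work directly with the Perron eigenvector of $Q(G)$ and to argue by contradiction, so suppose $\sigma_1>d_1+3$; since $d_1\ge 8$ this already forces $\sigma_1>11$. Let $v_1$ be a vertex of maximum degree, which is unique because $d_1\ge 8>4\ge d_2$. As $G$ is connected, $Q(G)$ is non-negative and irreducible, so by Lemma~\ref{PFtheorem} it admits a positive eigenvector $\boldsymbol x$ for $\sigma_1$; rescale it so that $\max_v x_v=1$. Writing the eigenvalue equation at a vertex $v$ in the form $(\sigma_1-d(v))x_v=\sum_{u\sim v}x_u$, the first step is to show the maximum entry is attained at $v_1$: if $x_w=1$ then $(\sigma_1-d(w))x_w\le d(w)x_w$, so $\sigma_1\le 2d(w)$, and any $w\ne v_1$ has $d(w)\le d_2\le 4$, contradicting $\sigma_1>8$. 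Hence $x_{v_1}=1$ and $x_u<1$ for every $u\ne v_1$; in particular $\sigma_1-d_1=\sum_{u\sim v_1}x_u>0$, so $\sigma_1>d_1$.

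Next I would pin down the entries on the neighbourhood of $v_1$. Set $M_1=\max\{x_u: u\sim v_1\}$ and $M_2=\max\{x_u: u\ne v_1,\ u\not\sim v_1\}$ (when $d_1=n-1$ the latter set is empty and the claim below is vacuous). First, $M_2\le M_1$: choosing a vertex realising $M_2$, all of its neighbours are different from $v_1$, hence have $\boldsymbol x$-value at most $\max(M_1,M_2)$, so if $M_2>M_1$ the eigenvalue equation there gives $\sigma_1\le 2d_2\le 8$, a contradiction. Now take a neighbour $u^*$ of $v_1$ with $x_{u^*}=M_1$. Among its at most $d_2\le 4$ neighbours, exactly one is $v_1$ (contributing $x_{v_1}=1$), while each of the remaining at most three has $\boldsymbol x$-value at most $M_1$ — those adjacent to $v_1$ by definition of $M_1$, the others by $M_2\le M_1$. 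Hence $(\sigma_1-d(u^*))M_1\le 1+3M_1$, and since $d(u^*)\le 4$ this gives the crucial estimate $M_1\le 1/(\sigma_1-7)$.

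To finish, return to the eigenvalue equation at $v_1$: $\sigma_1-d_1=\sum_{u\sim v_1}x_u\le d_1M_1\le d_1/(\sigma_1-7)$. Multiplying out (legitimate since $\sigma_1>7$) yields $\sigma_1^2-(d_1+7)\sigma_1+6d_1\le 0$. The quadratic $g(t)=t^2-(d_1+7)t+6d_1$ satisfies $g(d_1+3)=2d_1-12\ge 0$ because $d_1\ge 8$, and $d_1+3$ lies to the right of its vertex $t=(d_1+7)/2$; since $g(\sigma_1)\le 0$, it follows that $\sigma_1\le d_1+3$, contradicting the hypothesis. The second case $d_1\ge 11$, $d_n=1$ requires no new idea: the assumptions on $d_n$ serve only to keep $\sigma_1$ comfortably above $8$, which is already guaranteed here.

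I expect the delicate point to be extracting the constant $3$ rather than $4$. The standard degree-based bounds — for instance $\sigma_1\le\max_{uv\in E}(d(u)+d(v))$ — only deliver $\sigma_1\le d_1+4$ under these hypotheses, so a two-step refinement seems unavoidable: first bound $M_1$ by $1/(\sigma_1-7)$ by exploiting that the single neighbour $v_1$ dominates the local equation at $u^*$, then substitute this back into the equation at $v_1$; it is precisely this feedback that trims the ``$+4$'' to a ``$+3$''. I would also need to verify the degenerate configurations (a pendant neighbour of $v_1$, or $v_1$ adjacent to all other vertices), but in each of these the inequalities involved only become slacker, so they should not cause trouble.
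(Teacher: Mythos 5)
Your argument is correct, and each step survives scrutiny: the maximum entry of the normalised Perron vector must sit at the unique vertex $v_1$ of degree $d_1$ because any other vertex $w$ with $x_w=1$ would force $\sigma_1\le 2d(w)\le 2d_2\le 8$, contradicting $\sigma_1>d_1+3\ge 11$; the comparison $M_2\le M_1$ and the local equation at a neighbour $u^*$ of $v_1$ attaining $M_1$ (one neighbour equal to $v_1$, at most three others of value at most $M_1$) give $(\sigma_1-7)M_1\le 1$; substituting into the equation at $v_1$ yields $(\sigma_1-d_1)(\sigma_1-7)\le d_1$, whose larger root $\tfrac12\big(d_1+7+\sqrt{(d_1-5)^2+24}\,\big)$ is strictly below $d_1+3$ once $d_1\ge 8$ (indeed $g(d_1+3)=2d_1-12>0$ and $d_1+3$ exceeds the vertex of the parabola), so the contradiction is genuine. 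Bear in mind that this paper does not prove the lemma at all --- it is imported verbatim from \cite{Ye2024two}, and Section~2 explicitly reproduces only statements --- so there is no in-text proof to compare against. Your second-neighbourhood feedback argument is self-contained and, notably, never uses the hypotheses on $d_n$: it shows that $d_1\ge 8$ and $d_2\le 4$ alone already force $\sigma_1\le d_1+3$ for a connected graph (the bound $(\sigma_1-d_1)(\sigma_1-7)\le d_1$ is even attained, e.g., by $K_1\vee 2K_4$ when $d_1=8$). The case split on $d_n$ in the quoted statement is an artefact of the proof in \cite{Ye2024two}, which goes through degree and average-$2$-degree upper bounds for $\sigma_1$ and needs $d_n\ge 2$ or $d_1\ge 11$ to control the average degree of neighbours of $v_1$; your direct Perron-vector refinement bypasses that and proves a marginally stronger result. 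Two small points to tighten in a write-up: state explicitly that $g(\sigma_1)\le 0$ places $\sigma_1$ between the two real roots of $g$ (the discriminant is $(d_1-5)^2+24>0$), and note that when $M_2$ ranges over an empty set the inequality $M_2\le M_1$ is simply not needed rather than ``vacuously true and then used''.
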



\begin{lemma}{\rm(cf.\cite{ZhangXD})}\label{kappa1-lem}  Let $G$ be a connected  graph of order $n\ge 2$. If ${\boldsymbol x}=(x(u_1),x(u_2),$ $\ldots,x(u_n))^\intercal$ is a unit real vector defined  on the vertex set $V(G)$,
	then  $$\sigma_1(G)\geq   \sum_{uv\in
		E(G)}(x(u)+x(v))^2,$$
	with equality  if and only if  ${\boldsymbol x}$ is   an eigenvector associated with $\sigma_1(G)$. \end{lemma}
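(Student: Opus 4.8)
The plan is to reduce the statement to the Rayleigh-quotient characterization of the largest eigenvalue recorded in Lemma~\ref{lambda1-lem}, applied to the matrix $B=Q(G)$. The crucial preliminary step is to establish the identity
$$\boldsymbol{x}^\intercal Q(G)\boldsymbol{x}=\sum_{uv\in E(G)}\bigl(x(u)+x(v)\bigr)^2,$$
valid for every real vector $\boldsymbol{x}$ on $V(G)$. To verify it I would split $Q(G)=D(G)+A(G)$: the diagonal part contributes $\sum_{v\in V(G)}d_G(v)\,x(v)^2=\sum_{uv\in E(G)}\bigl(x(u)^2+x(v)^2\bigr)$, since each edge $uv$ is counted once at $u$ and once at $v$, while the adjacency part contributes $\boldsymbol{x}^\intercal A(G)\boldsymbol{x}=2\sum_{uv\in E(G)}x(u)x(v)$; summing these expressions and completing the square edge by edge gives the identity.

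Granting the identity, the inequality is immediate: for a unit vector $\boldsymbol{x}$, Lemma~\ref{lambda1-lem} yields $\sigma_1(G)=\lambda_1(Q(G))=\max_{\|\boldsymbol{y}\|=1}\boldsymbol{y}^\intercal Q(G)\boldsymbol{y}\ge\boldsymbol{x}^\intercal Q(G)\boldsymbol{x}=\sum_{uv\in E(G)}(x(u)+x(v))^2$.

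For the equality characterization, one direction is trivial: if $\boldsymbol{x}$ is a unit eigenvector of $Q(G)$ associated with $\sigma_1(G)$, then $\boldsymbol{x}^\intercal Q(G)\boldsymbol{x}=\sigma_1(G)\,\boldsymbol{x}^\intercal\boldsymbol{x}=\sigma_1(G)$. For the converse I would fix an orthonormal basis $\boldsymbol{v}_1,\dots,\boldsymbol{v}_n$ of eigenvectors of $Q(G)$ with eigenvalues $\sigma_1(G)\ge\cdots\ge\sigma_n(G)$, write $\boldsymbol{x}=\sum_i c_i\boldsymbol{v}_i$ with $\sum_i c_i^2=1$, and observe that $\boldsymbol{x}^\intercal Q(G)\boldsymbol{x}=\sum_i\sigma_i(G)c_i^2\le\sigma_1(G)$, with equality precisely when $c_i=0$ for every $i$ with $\sigma_i(G)<\sigma_1(G)$, that is, when $\boldsymbol{x}$ lies in the $\sigma_1(G)$-eigenspace. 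Since $G$ is connected, $Q(G)$ is a non-negative irreducible matrix, so Lemma~\ref{PFtheorem} additionally tells us $\sigma_1(G)$ is simple with an all-positive eigenvector; this makes the eigenspace one-dimensional, although the argument above does not require it.

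I do not expect a substantial obstacle: the lemma is a standard fact, obtained by combining the quadratic-form identity with the variational description of $\lambda_1$. The only point deserving care is the equality case, where one must track exactly which coordinates in the eigenbasis expansion are forced to vanish; invoking connectedness (hence Perron--Frobenius, Lemma~\ref{PFtheorem}) streamlines the bookkeeping but is not strictly necessary for the stated conclusion.
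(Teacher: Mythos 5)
Your proof is correct. Note that the paper itself offers no proof of this lemma: it is stated with the citation ``(cf.~\cite{ZhangXD})'' and imported from the literature, so there is no in-paper argument to compare against. Your route --- the quadratic-form identity $\boldsymbol{x}^\intercal Q(G)\boldsymbol{x}=\sum_{uv\in E(G)}(x(u)+x(v))^2$ obtained from the splitting $Q(G)=D(G)+A(G)$, followed by the Rayleigh characterization of Lemma~\ref{lambda1-lem} and the eigenbasis expansion for the equality case --- is the standard proof of this fact and is sound; your observation that connectedness (hence the simplicity of $\sigma_1(G)$ via Lemma~\ref{PFtheorem}) is convenient but not logically necessary for the stated equivalence is also accurate.
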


Finally, we return to spectral moments. For a graph $G$, we write $\varsigma_G(F)$ to denote  the number of subgraphs isomorphic to $F$.

\begin{lemma}\label{TG-lem} {\rm\cite{18}} If $G$ is a  graph of order $n$ and size $m$,  then  $$  T_1(G)=\sum_{i=1}^{n}d_{i}=2m, \hspace{5pt}T_2(G)=\sum_{i=1}^{n}d^{2}_{i}+2m,\hspace{5pt}\text{and}\hspace{5pt}
T_3(G)=6\varsigma_G(C_3)+\sum_{i=1}^{n}d^{3}_{i}+3\sum_{i=1}^{n}d^{2}_{i}.$$
\end{lemma}

\section{$Q$-eigenvalues of certain joins}\label{sec3}

This section is concerned with the $Q$-eigenvalues of certain graph joins including, but not limited to, the following cases:
 $$K_1\vee (C_{s_1}\cup C_{s_2}\cup\cdots \cup C_{s_t}\cup qK_2),\quad K_1\vee (P_{3}\cup C_{s_1}\cup C_{s_2}\cup \cdots \cup C_{s_t}\cup qK_2) \quad \text{and} \quad K_1\vee (P_l\cup G_1),$$
where $P_l$ stands for the $l$-vertex path. We explicitly compute the $Q$-spectrum of the former graph.

\begin{lemma}\label{l3.1}Let $n$ be the order of $G\cong K_1\vee (C_{s_1}\cup C_{s_2}\cup\cdots \cup C_{s_t}\cup qK_2)$, with $t,q\ge 1$ and $s_i\ge 3$ $(1\le i\le t)$. Then
	$$S_Q(G)=\Big\{\lambda_1,\lambda_2,\lambda_3, 1^{(q)},3^{(q-1)},5^{(t-1)}, 3+2\cos\frac{2j\pi}{s_i}~:~ 1\le j\le s_i-1, 1\le i\le t\Big\},$$
	where $\lambda_1$, $\lambda_2$ and $\lambda_3$ are the  roots of $\lambda^3-(n+7)\lambda^2+(7n+8)\lambda-12n+4q+12$. Additionally, they satisfy $\lambda_1>n>5>\lambda_2>3>\lambda_3>1$.
\end{lemma}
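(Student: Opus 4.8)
The plan is to diagonalise $Q(G)$ by hand, splitting $\mathbb{R}^{n}$ into the $Q(G)$-invariant subspace $W_\pi$ spanned by the characteristic vectors of a suitable equitable partition $\pi$ and its (also $Q(G)$-invariant) orthogonal complement $W_\pi^{\perp}$. Throughout write $\ell=\sum_{i=1}^{t}s_i$, so that $n=\ell+2q+1$; the apex vertex $v_1$ (the copy of $K_1$) has degree $n-1$, each vertex of a cycle $C_{s_i}$ has degree $3$, and each vertex of $qK_2$ has degree $2$. Let $\pi$ be the partition of $V(G)$ into $\{v_1\}$, the cycle classes $V(C_{s_1}),\dots,V(C_{s_t})$ and the $q$ pairs $V(K_2^{(1)}),\dots,V(K_2^{(q)})$. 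Since a cycle vertex has exactly two neighbours in its own class and one in $\{v_1\}$, a $K_2$-vertex has one neighbour in its own class and one in $\{v_1\}$, and $v_1$ is adjacent to everything, $\pi$ is equitable for $Q(G)$.

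On $W_\pi^{\perp}$ (the space of vectors summing to zero on each class of $\pi$, of dimension $n-(t+q+1)=\ell-t+q$) I would exhibit an eigenbasis built from the components of $G-v_1$. If $\boldsymbol z$ is an eigenvector of $A(C_{s_i})$ for $2\cos\frac{2j\pi}{s_i}$ with $1\le j\le s_i-1$ — so that $\boldsymbol z$ has zero entry sum — then extending $\boldsymbol z$ by zeros gives a $Q(G)$-eigenvector for $3+2\cos\frac{2j\pi}{s_i}$; this is checked by evaluating $Q(G)$ at $v_1$ (where the entries sum to zero), at a vertex of $C_{s_i}$ (where the degree contributes an extra $+1$ and the adjacency reproduces $2\cos\frac{2j\pi}{s_i}$), and at any other vertex. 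Likewise, the vector taking values $+1,-1$ on the two vertices of one copy of $K_2$ and $0$ elsewhere is a $Q(G)$-eigenvector for $1$. This yields $\sum_i(s_i-1)=\ell-t$ and $q$ eigenvectors, manifestly independent and lying in $W_\pi^{\perp}$, hence a basis of it.

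On $W_\pi$ I would use the $(t+q+1)\times(t+q+1)$ quotient matrix $N$ of $Q(G)$, which is of arrowhead shape: apex row $(n-1,s_1,\dots,s_t,2,\dots,2)$, each cycle row with single off-diagonal entry $1$ in the apex column and diagonal entry $5$, each $K_2$ row with off-diagonal entry $1$ in the apex column and diagonal entry $3$. The standard arrowhead determinant identity gives
\[
\det(\lambda I-N)=(\lambda-5)^{t-1}(\lambda-3)^{q-1}\Bigl[(\lambda-n+1)(\lambda-5)(\lambda-3)-\ell(\lambda-3)-2q(\lambda-5)\Bigr],
\]
and substituting $\ell=n-1-2q$ turns the bracketed cubic into $\lambda^{3}-(n+7)\lambda^{2}+(7n+8)\lambda-12n+4q+12$. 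By Lemma~\ref{equitable}, the eigenvalues of $N$ — namely $5^{(t-1)},3^{(q-1)}$ and the three roots $\lambda_1,\lambda_2,\lambda_3$ of that cubic — are $Q(G)$-eigenvalues; concretely, one writes down the corresponding eigenvectors inside $W_\pi$ (constant $e_i$ on $V(C_{s_i})$ and zero elsewhere with $\sum_i s_ie_i=0$ for the eigenvalue $5$; constant $c_k$ on $V(K_2^{(k)})$ and zero elsewhere with $\sum_kc_k=0$ for the eigenvalue $3$; the lifts of $N$-eigenvectors for $\lambda_1,\lambda_2,\lambda_3$). As $\lambda_1,\lambda_2,\lambda_3\notin\{3,5\}$ (since the cubic does not vanish at $3$ or $5$) and are mutually distinct (shown below), these $(t-1)+(q-1)+3=\dim W_\pi$ vectors are independent, hence a basis of $W_\pi$. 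Combined with the basis of $W_\pi^{\perp}$ this produces exactly $n$ eigenvalues, and reading them off gives precisely the multiset in the statement.

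Finally, to locate $\lambda_1,\lambda_2,\lambda_3$: writing $f$ for the cubic, a short computation gives $f(1)=4q-6n+14$, $f(3)=4q$, $f(5)=4q-2n+2$ and $f(n)=4(q-n+3)$. Since $q\ge1$ and $n=\ell+2q+1\ge 2q+4$, one gets $f(1)<0<f(3)$, $f(5)<0$, $f(n)<0$ and $n>5$; as $f$ has positive leading coefficient, the intermediate value theorem places one root in $(1,3)$, one in $(3,5)$, and the only remaining root beyond $n$, so the three roots are distinct and satisfy $\lambda_1>n>5>\lambda_2>3>\lambda_3>1$. I expect no conceptual obstacle: the two points requiring care are the bookkeeping of the eigenbasis (cleanly handled by the $W_\pi\oplus W_\pi^{\perp}$ split) and the algebraic cancellations yielding $f(3)=4q$ and $f(n)=4(q-n+3)$, which are exactly what drives the interlacing.
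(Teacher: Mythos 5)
Your proof is correct; I checked the arrowhead expansion (with $\ell=n-1-2q$ it does reduce to $\lambda^{3}-(n+7)\lambda^{2}+(7n+8)\lambda-12n+4q+12$) and the values $f(1)=4q-6n+14$, $f(3)=4q$, $f(5)=4q-2n+2$, $f(n)=4(q-n+3)$, whose signs under $n\ge 2q+4$ give exactly the claimed root locations. The raw ingredients are the same as the paper's --- explicit eigenvectors supported on the cycles and on the $K_2$'s, plus the equitable quotient matrix for the remaining three eigenvalues --- but your packaging is genuinely different and tighter on the one delicate point. The paper lists candidate eigenvectors for $1$, $3$, $5$, and $3+2\cos(2j\pi/s_i)$, obtains the cubic as a factor of the minimal polynomial, and then closes with an informal discussion of why the whole collection stays linearly independent when some of the computed eigenvalues happen to coincide. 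You sidestep that discussion entirely: the orthogonal decomposition $\mathbb{R}^n=W_\pi\oplus W_\pi^{\perp}$ into $Q(G)$-invariant subspaces, together with the exact dimension counts $\dim W_\pi^{\perp}=\ell-t+q$ and $\dim W_\pi=t+q+1$, forces the two eigenvector families to be bases of their respective subspaces, so completeness of the spectrum is automatic and no case analysis on coincidences is needed. You also make explicit two computations the paper only asserts (the derivation of the cubic from the quotient matrix and the sign analysis locating its roots). The only minor point worth a sentence in a final write-up is that for a fixed cycle $C_{s_i}$ you should take an orthogonal eigenbasis of the complement of the all-ones vector (the eigenvalues $2\cos(2j\pi/s_i)$ come in coincident pairs $j\leftrightarrow s_i-j$), so that the $s_i-1$ lifted vectors are independent; this is standard and does not affect the argument.
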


\begin{proof} We suppose that $V(G)=\{u_i\,:\, 1\leq i\leq n\}$, where $d_G(u_n)=n-1$, $d_G(u_i)=2$ for $1\leq i\leq 2q$ and $u_{2j}u_{2j-1}\in E(G)$ for $1\leq j\leq q$. In what follows, we construct the eigenvectors for  certain eigenvalues.

We first deal with the eigenvalue 5. Let ${\boldsymbol \psi_j}=(\psi_{j}(u_1),\psi_{j}(u_2),\ldots,\psi_{j}(u_n))^\intercal$ for $1\leq j\leq t-1$, where $$\psi_{j}(u_i)=\left\{\begin{array}{rl}-s_{j+1},&\text{for}~u_i\in V(C_{s_1}),\\ s_1,& \text{for}~u_i\in V(C_{s_{j+1}}),\\ 0,&\text{otherwise}.\end{array}\right.$$

It is straightforward to verify that this collection forms a set of linearly independent eigenvectors corresponding to the eigenvalue $5$ of $Q(G)$. Consequently, $\mathrm{mul}_{G}(5) \geq t - 1$.

Secondly, we deal with the eigenvalues 1 and 3. As before, $${\boldsymbol \xi_1}=\big(1,-1,0^{(n-2)}\big)^\intercal,  {\boldsymbol \xi_2}=\big(0,0,1,-1,0^{(n-4)}\big)^\intercal, \ldots, {\boldsymbol \xi_q}=\big(0^{(2q-2)},1,-1,0^{(n-2q)}\big)^\intercal.$$
are linearly independent eigenvectors. Thus, $\mathrm{mul}_{G}(1)\ge q$. Similarly $${\boldsymbol \zeta_1}=(1,1,-1,-1,0^{(n-4)})^\intercal,  {\boldsymbol \zeta_2}=(0^{(2)},1,1,-1,-1,0^{(n-6)})^\intercal, \ldots, {\boldsymbol \zeta_{q-1}}=(0^{(2q-4)},1,1,-1,-1,0^{(n-2q)})^{\intercal}.$$
 are associated with 3,  when $q\ge 2$. Thus, $\mathrm{mul}_{G}(3)\ge q-1$.

For convenience, we denote $V(C_{s_i})=\{v_{i1},v_{i2},\ldots,v_{is_i}\}, 1\leq i\leq t$. Suppose that $A(C_{s_{i}}){\boldsymbol \omega}=\varrho {\boldsymbol \omega}$ (${\boldsymbol \omega}\ne {\bf 0}$), i.e., ${\boldsymbol \omega}$ is a unit eigenvector for $A(C_{s_{i}})$ corresponding  to the eigenvalue $\varrho$. It is well known that $\varrho=2\cos(2k\pi/s_i)$, $0\leq k\leq s_i-1$. Besides, it is easy to prove that if $\varrho\neq 2$  (i.e. $k\neq 0$), and then ${\boldsymbol \omega}=(\omega(v_{i1}),\omega(v_{i2}),\ldots,\omega(v_{is_{i}}))^{\intercal}$ satisfies $\sum_{j=1}^{s_{i}}\omega(v_{ij})=0$. Therefore, by excluding $\varrho=2$, we  construct an eigenvector $\bar{{\boldsymbol \omega}}=(\bar{\omega}(u_{1}), \bar{\omega}(u_{2}),\ldots,\bar{\omega}(u_{n}))^\intercal$  for $3+\varrho$ by setting
$$\bar{\omega}(u)=\left\{\begin{array}{rl}\omega(u),&\text{for}~u\in V(C_{s_{i}}),\\ 0,& \text{otherwise}.\end{array}\right.$$ To verify this, one may observe that $I_{s_{i}}+Q(C_{s_{i}})=3I_{s_{i}}+A(C_{s_{i}})$ is a principal submatrix of $Q(G)$. Therefore, $3+2\cos(2k\pi/s_i)$, $1\leq k\leq s_{i}-1$,  are the eigenvalues of $Q(G)$ for every $i~(1\leq i\leq t)$.

It remains to deal with the $Q$-eigenvalues denoted by $\lambda_1, \lambda_2$ and $\lambda_3$ in the statement formulation. The polynomial $\lambda^3 - (n+7)\lambda^2 + (7n+8)\lambda - 12n + 4q + 12$ is obtained as a factor of the minimal polynomial of $Q(G)$; it can also be derived from the characteristic polynomial of the corresponding quotient matrix.
 It is verified directly that the corresponding roots, $\lambda_1,\lambda_2$ and $\lambda_3$, satisfy $\lambda_1>n> 5>\lambda_2>3>\lambda_3>1$. Moreover, $${\boldsymbol \phi_i}=\big((\lambda_i-5)^{(2q)},(\lambda_i-3)^{(n-2q-1)},(\lambda_i-3)(\lambda_i-5)\big)^\intercal$$
is an eigenvector of $Q(G)$ corresponding to the eigenvalue $\lambda_i$, for $1\le i \le 3$.

An additional scenario must be considered: In certain cases, some of the computed eigenvalues may coincide, raising the question of whether their corresponding eigenvectors remain linearly independent.
From the already proved chain of, we obtain that $\lambda_1, \lambda_2$ and $\lambda_3$ are mutually distinct and do not belong to $\{1, 3, 5\}$. In addition,
$
1 \leq 3 + 2\cos\frac{2k\pi}{s_i} < 5,
$
for $1 \leq k \leq s_i - 1$ and $1 \leq i \leq t$.
In this context, if the equality
\[
3 + 2\cos\frac{2k\pi}{s_i} = \lambda_j
\]
holds for some choice of indices $i$, $j$ and $k$, then the corresponding eigenvectors are linearly independent by construction. The same conclusion applies if the left-hand side equals either $1$ or $3$. This completes the argument and the entire proof.
\end{proof}

\begin{remark}\label{r3.2} Note that, in the previous lemma,  $3+2\cos(2j\pi/s_i)=1$ occurs if and only if $s_i$ is even and $j=s_i/2$. Thus, $\mathrm{mul}_G(1)=q+c_{e}(C)$, where $c_{e}(C)$ denotes the number of even cycles among~$C_{s_i}$. Besides, we also have $$n<\lambda_1=\sigma_1(G)<n+2,~ \sigma_n(G)=1<\lambda_3,~\sigma_2(G)<5~ (\text{when}~ t=1),~\text{and}~\sigma_2(G)=5~(\text{when}~t\ge 2).$$ Last but not least, for any fixed order $n$, the largest eigenvalue $\sigma_1(G)$ depends on $q$ but is independent of both the number of cycles and their lengths. This conclusion follows from the preceding proof and will also be corroborated by Corollary~\ref{coro3.4} below.
\end{remark}

We now extend the context to include the possibility of cycles $C_2$, which are regarded as digons consisting of two parallel edges connecting the same pair of vertices. In this setting, the degree of a vertex is defined as the number of edges incident to it, and the entries of the adjacency matrix represent the multiplicity of the corresponding edges.

\begin{lemma}\label{l3.3}Let $G^*\cong K_1\vee (C_{s_1}\cup C_{s_2}\cup\cdots \cup C_{s_t}\cup qK_2)$, $t,q\ge 1$ and $s_i\ge 2$ $(1\le i\le t)$. If $n$ is the order of $G^*$, then $\sigma_1(G^*)$
is equal to the largest root of $\lambda^3-(n+7)\lambda^2+(7n+8)\lambda-12n+4q+12$.
\end{lemma}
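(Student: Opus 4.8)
The plan is to reduce Lemma~\ref{l3.3} to the already-established Lemma~\ref{l3.1} by treating the digon $C_2$ as a limiting degenerate case of the computation carried out there. First I would observe that the eigenvector construction in the proof of Lemma~\ref{l3.1} for the three ``exceptional'' eigenvalues $\lambda_1,\lambda_2,\lambda_3$ never used the hypothesis $s_i\ge 3$: the vector ${\boldsymbol \phi_i}=\big((\lambda_i-5)^{(2q)},(\lambda_i-3)^{(n-2q-1)},(\lambda_i-3)(\lambda_i-5)\big)^\intercal$ depends only on how many degree-$2$ vertices sit in the $K_2$-part, how many degree-$2$ vertices sit in the cycle-part, and the single apex vertex of degree $n-1$. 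When some $C_{s_i}=C_2$, each such digon still contributes two vertices of degree $2$ inside $G^*$ (one edge of multiplicity two between them, plus... no---each endpoint has one doubled edge giving degree $2$, and the two joins to the apex... wait: degree is $2$ from the digon plus $1$ from the apex, total $3$). I must be careful here: in $G^*$ the digon vertices have degree $3$, not $2$, so the block structure changes. The right move is therefore not to quote $\phi_i$ verbatim but to set up the equitable partition directly.

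Concretely, the key step is to exhibit an equitable partition of $V(G^*)$ with respect to $Q(G^*)$ whose quotient matrix has characteristic polynomial divisible by $\lambda^3-(n+7)\lambda^2+(7n+8)\lambda-12n+4q+12$. Partition the vertices into three cells: the apex $\{u_n\}$; the $2q$ vertices lying in the $qK_2$ part (each of degree $3$ in $G^*$: two from its $K_2$-neighbour... no, one from the $K_2$ edge plus one from the apex gives degree $2$); and the remaining $n-2q-1$ vertices lying in the cycles/digons (each of degree $3$ in $G^*$ when the cycle is ordinary, and also degree $3$ when it is a digon, since a digon vertex has a doubled edge of multiplicity $2$ plus the apex edge). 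One checks this partition is equitable for $Q(G^*)=A(G^*)+D(G^*)$: every vertex in cell~2 has signless-Laplacian row sum $2$ (diagonal) $+\,1$ (to its mate in cell~2) $+\,1$ (to the apex); every vertex in cell~3 has row sum $3+2+1$ whether the incident ``cycle'' is long or a digon, precisely because the digon's doubled edge contributes $2$ to both the degree and the off-diagonal entry. The apex has row sum $(n-1)+(2q)\cdot 1+(n-2q-1)\cdot 1$. Lemma~\ref{equitable} then gives that the largest eigenvalue of this $3\times 3$ quotient matrix equals $\sigma_1(G^*)$, once irreducibility of $Q(G^*)$ is noted (which holds since $G^*$ is connected through the apex). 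Expanding $\det(\lambda I - N)$ for this quotient matrix $N$ and simplifying should reproduce exactly $\lambda^3-(n+7)\lambda^2+(7n+8)\lambda-12n+4q+12$; the coefficients depend only on $n$ and $q$, matching Remark~\ref{r3.2}'s observation that $\sigma_1$ is insensitive to the number and lengths of the cycles.

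The main obstacle I anticipate is the bookkeeping in verifying equitability and in the determinant expansion when digons are present: one must confirm that the multigraph conventions (adjacency entries equal edge multiplicities, degrees count edges with multiplicity) make the digon vertices behave, from the point of view of the three-cell quotient, identically to ordinary cycle vertices. Once that is pinned down, the computation is the same polynomial as in Lemma~\ref{l3.1}. A secondary point is to confirm that the largest root of the cubic is genuinely the Perron value of $N$ (not an interior root); this follows since $N$ is a nonnegative irreducible $3\times 3$ matrix (Lemma~\ref{PFtheorem}), so its spectral radius is a simple root and, by Lemma~\ref{equitable}, coincides with $\sigma_1(Q(G^*))$, which is itself the spectral radius of the nonnegative irreducible matrix $Q(G^*)$. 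I would close by remarking that, unlike Lemma~\ref{l3.1}, no claim is made here about the full spectrum: the other quotient eigenvalues and the eigenvalues coming from digon blocks ($Q(C_2)$ has eigenvalues $4$ and $0$, shifting to $5$ and $1$ in $G^*$) are not needed, since only $\sigma_1(G^*)$ is asserted.
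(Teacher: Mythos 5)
Your proposal is correct and follows essentially the same route as the paper: the paper's proof is exactly the three-cell equitable quotient (apex; the $2q$ vertices of the $qK_2$ part; the $n-2q-1$ cycle/digon vertices), whose quotient matrix $N$ has characteristic polynomial $\lambda^3-(n+7)\lambda^2+(7n+8)\lambda-12n+4q+12$, combined with Lemma~\ref{equitable}. Your verification that digon vertices contribute the same row sums as ordinary cycle vertices (degree $3$ plus off-diagonal sum $2$ within the cell) is precisely the point that makes the paper's quotient matrix valid in the multigraph setting; the only step you leave implicit is the routine expansion of $\det(\lambda I-N)$, which does yield the stated cubic.
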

\begin{proof} The characteristic polynomial of the equitable quotient matrix
\begin{equation*}
 N=   \begin{bmatrix}
    n-1 & n-1-2q & 2q\\
    1 & 5 & 0\\
    1 & 0 & 3
    \end{bmatrix}.
\end{equation*}
is $\lambda^3-(n+7)\lambda^2+(7n+8)\lambda-12n+4q+12.$
By Lemma \ref{equitable},  $\sigma_1(G^*)=\lambda_1(N)$.
\end{proof}

Combining this result with Lemma \ref{l3.1}, Remark \ref{r3.2} and Lemma~\ref{l3.3}, we  immediately obtain an interesting  corollary.
\begin{corollary}\label{coro3.4} Given a graph $G\cong K_1\vee (C_{s_1}\cup C_{s_2}\cup\cdots \cup C_{s_t}\cup qK_2)$ and a multigraph $G^*\cong K_1\vee (C_{s'_1}\cup C_{s'_2}\cup\cdots \cup C_{s'_r}\cup qK_2)$, where $t,r,q\ge 1$, $s_i\ge 3$ $(1\le i\le t)$ and $s'_i\ge 2$ $(1\le i\le r)$. If $n(G)=n(G^*)$, then
$\sigma_1(G)=\sigma_1(G^*)$.
\end{corollary}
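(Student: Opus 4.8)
The plan is to reduce the claim to the observation, already implicit in the preceding results, that the largest $Q$-eigenvalue of any graph of the stated shape is the largest root of a fixed cubic that depends only on the order $n$ and on the parameter $q$, and not at all on the number or the lengths of the cycles. Concretely, I would first apply Lemma~\ref{l3.1} to the simple graph $G$: since $t,q\ge 1$ and every $s_i\ge 3$, that lemma gives $\sigma_1(G)=\lambda_1$, the largest root of
\[
p_{n,q}(\lambda)=\lambda^3-(n+7)\lambda^2+(7n+8)\lambda-12n+4q+12,
\]
with $n=n(G)$. Next I would apply Lemma~\ref{l3.3} to the multigraph $G^*$: here $t$ is replaced by $r\ge 1$, $q\ge 1$, and the possibility $s'_i=2$ is precisely the situation that lemma was designed to accommodate, so it yields that $\sigma_1(G^*)$ equals the largest root of $p_{n(G^*),q}(\lambda)$.

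The conclusion is then immediate. Both $G$ and $G^*$ contain the same component $qK_2$, so the value of $q$ occurring in the two cubics is the same, and by hypothesis $n(G)=n(G^*)$. Hence $p_{n(G),q}=p_{n(G^*),q}$ as polynomials, and therefore their largest roots coincide, i.e.\ $\sigma_1(G)=\sigma_1(G^*)$.

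There is essentially no obstacle, since the substantive work is already contained in Lemmas~\ref{l3.1} and~\ref{l3.3}. If a self-contained derivation is preferred, one can instead note that for either graph the vertex partition into $\{u_n\}$ (the dominating vertex), the set of all cycle vertices, and the set of all $K_2$-vertices is equitable for $Q$, with equitable quotient matrix exactly the matrix $N$ of Lemma~\ref{l3.3}: the block row sums ($1$ from a cycle or $K_2$ vertex to $u_n$, and $n-1-2q$, $2q$ from $u_n$ to the two classes, $0$ between those classes) together with the diagonal entries $n-1$, $5$, $3$ are insensitive to the cycle lengths. Lemma~\ref{equitable} then gives $\sigma_1=\lambda_1(N)$ for both graphs, and since $N$ depends only on $n$ and $q$, equality follows. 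The one point worth a moment's care is simply that the hypotheses $t,r,q\ge 1$ are exactly what make Lemmas~\ref{l3.1} and~\ref{l3.3} applicable, so no separate treatment of degenerate ranges is required.
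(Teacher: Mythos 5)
Your proposal is correct and follows essentially the same route as the paper, which derives the corollary by combining Lemma~\ref{l3.1} (giving $\sigma_1(G)$ as the largest root of the cubic depending only on $n$ and $q$) with Lemma~\ref{l3.3} (giving the same for the multigraph $G^*$). Your supplementary equitable-quotient argument is also exactly the mechanism behind Lemma~\ref{l3.3}, so nothing new is needed.
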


We now turn to the second join product, continuing within the broader context.

\begin{lemma}\label{P3-lem} Let  $G^*\cong K_1\vee (P_{3}\cup C_{s_1}\cup C_{s_1}\cup \cdots \cup C_{s_t}\cup qK_2)$, where $t, q\ge 0$ and $s_i\ge 2$ $(1\le i \le t)$.Then  the least $Q$-eigenvalue $\sigma_n(G^*)$ is less than $1$.
\end{lemma}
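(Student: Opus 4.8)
The goal is to show that $K_1 \vee (P_3 \cup C_{s_1} \cup \cdots \cup C_{s_t} \cup qK_2)$ has least $Q$-eigenvalue strictly below $1$. The natural strategy is to exhibit a unit test vector $\boldsymbol{x}$ with $\boldsymbol{x}^\intercal Q(G^*) \boldsymbol{x} < 1$ and then invoke the Rayleigh quotient characterization of $\sigma_n$ (the dual of Lemma~\ref{lambda1-lem}, which states $\sigma_n(B) = \min_{\|\boldsymbol{x}\|=1} \boldsymbol{x}^\intercal B \boldsymbol{x}$). Equivalently, and perhaps more transparently, one can use the well-known identity $\boldsymbol{x}^\intercal Q(G^*)\boldsymbol{x} = \sum_{uv \in E(G^*)} (x(u) + x(v))^2$ for the signless Laplacian, so the task reduces to finding a unit vector that makes this sum of squares smaller than $1$.

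The key observation is that the $P_3$ component is "almost bipartite" in a way the cycles may not be: label the path $a - b - c$ and the apex vertex $w$. I would try a vector supported only on $\{a, b, c, w\}$ (setting $x(v) = 0$ on everything else), so that $\sum_{uv \in E} (x(u)+x(v))^2$ collapses to the contribution of the induced subgraph $G^*[\{a,b,c,w\}] \cong K_1 \vee P_3$. On $K_1 \vee P_3$ the signless Laplacian is a fixed $4 \times 4$ matrix, so its least $Q$-eigenvalue is a concrete number; a direct computation of the characteristic polynomial of $Q(K_1 \vee P_3)$ shows this least eigenvalue is strictly less than $1$ (one checks the value of the characteristic polynomial at $\lambda = 1$ has the sign opposite to its leading behavior near $-\infty$, or just solves the degree-four polynomial). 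By Lemma~\ref{21l} with $X = \{a,b,c,w\}$ and the choices $q_a = q_c = 2$, $q_b = 2$, $q_w = d_{G^*}(w)$ truncated appropriately — wait, more directly: extending an eigenvector of $Q(K_1 \vee P_3)$ by zeros is generally \emph{not} an eigenvector of $Q(G^*)$, so the clean route is the Rayleigh quotient: take $\boldsymbol{x}$ to be the (zero-extended) least-eigenvalue unit eigenvector of $Q(K_1 \vee P_3)$, and then $\boldsymbol{x}^\intercal Q(G^*) \boldsymbol{x} = \boldsymbol{x}^\intercal Q(K_1 \vee P_3)\boldsymbol{x} = \sigma_{\min}(K_1 \vee P_3) < 1$, whence $\sigma_n(G^*) \le \sigma_{\min}(K_1 \vee P_3) < 1$.

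The one subtlety to handle with care is that the apex vertex $w$ of $G^*$ has degree $n-1$, much larger than its degree in $K_1 \vee P_3$, so when we zero-extend the test vector the edges from $w$ to the \emph{other} components must contribute zero to $\sum_{uv \in E}(x(u)+x(v))^2$ — and they do, precisely because both endpoints of every such edge (the far endpoint being in some $C_{s_i}$ or $K_2$, where $x = 0$, and... no: one endpoint is $w$ where $x(w) \ne 0$). This is the real point to get right: the identity $\boldsymbol{x}^\intercal Q(G^*)\boldsymbol{x} = \sum_{uv}(x(u)+x(v))^2$ will \emph{not} collapse to the induced subgraph unless I am careful. The fix is standard: use Lemma~\ref{21l} (interlacing for $\diag(q_1,\dots,q_k) + A(H)$) rather than the edge-sum identity. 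Set $H = G^*[\{a,b,c,w\}] \cong P_3 \vee K_1$ wait that is $K_1 \vee P_3$, take $q_a = q_c = 2 = d_{G^*}(a)$, $q_b = 2 = d_{G^*}(b)$, and $q_w = 3 \le d_{G^*}(w)$; then $\diag(2,2,2,3) + A(H)$ is exactly $Q(K_1 \vee P_3)$, and Lemma~\ref{21l} gives $\sigma_i(G^*) \ge \lambda_i(Q(K_1\vee P_3))$ — but this is the wrong direction. So instead I would apply Lemma~\ref{26l} repeatedly: delete all edges of $G^*$ except those of the $K_1 \vee P_3$ on $\{a,b,c,w\}$; by Lemma~\ref{26l}, $\sigma_n(G^*) \le \sigma_n(G^* - e)$ fails too (edge deletion \emph{decreases} $\sigma_n$). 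Hence the genuinely clean argument is simply the Rayleigh quotient with a carefully chosen vector: take $x(a) = x(c) = \alpha$, $x(b) = \beta$, $x(w) = \gamma$ with $a - b - c - $ path signs chosen so that $x(a) + x(b)$, $x(b)+x(c)$, $x(a)+x(w)$, $x(b)+x(w)$, $x(c)+x(w)$ are all small; since $P_3$ contains no odd cycle one can make the path edges contribute $0$ by alternating signs on $a, b, c$ (e.g. $x(a) = x(c) = 1$, $x(b) = -1$, appropriately normalized) — but then the three edges to $w$ give $(1+\gamma)^2 + (-1+\gamma)^2 + (1+\gamma)^2$, and with $\gamma$ a small negative number and the whole thing normalized by $\|\boldsymbol{x}\|^2 = 3 + \gamma^2$, a short optimization in the single parameter $\gamma$ yields a value strictly below $1$. \textbf{The main obstacle} is therefore purely bookkeeping: ensuring the test vector's interaction with the high-degree apex is controlled, which I expect to resolve by the explicit one-parameter Rayleigh-quotient computation above (or, most cleanly, by noting $Q(K_1 \vee P_3)$ appears as a principal submatrix only after the apex-degree discrepancy is absorbed, and then citing Cauchy interlacing in the correct direction: $\sigma_n(G^*) \le \sigma_n$ of any principal submatrix is \emph{false}, so the Rayleigh route it is).
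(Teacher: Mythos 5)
Your route is genuinely different from the paper's. The paper forms the $5\times 5$ equitable quotient matrix of $Q(G^*)$ (cells: apex, cycle vertices, end-vertices of $P_3$, middle vertex of $P_3$, $K_2$-vertices), computes its characteristic polynomial $g$, and notes $g(0)<0<g(1)$, so the quotient matrix --- and hence $Q(G^*)$, by Lemma~\ref{equitable} --- has an eigenvalue in $(0,1)$. You instead exhibit an explicit test vector and use the minimum Rayleigh quotient characterization of $\sigma_n$. That is a viable and more elementary alternative: it needs no equitable-partition machinery, it covers the degenerate cases $t=0$ and $q=0$ without a separate remark, and (via the edge-sum identity) it extends to the multigraph setting $s_i=2$ unchanged. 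You are also right that Lemma~\ref{21l}, Cauchy interlacing on principal submatrices, and edge deletion via Lemma~\ref{26l} all point in the wrong direction for an upper bound on $\sigma_n$.

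However, the computation you finally display does not yet close the argument, and the omission is exactly the apex interaction that you yourself identified as the main obstacle. With $x(a)=x(c)=1$, $x(b)=-1$, $x(w)=\gamma$ and $x\equiv 0$ elsewhere, the sum $\sum_{uv\in E}(x(u)+x(v))^2$ contains, besides the five terms you list, one term $(\gamma+0)^2=\gamma^2$ for each of the $n-4$ edges from the apex to the vertices of the cycles and the $K_2$'s. The correct Rayleigh quotient is
\[
\frac{2(1+\gamma)^2+(1-\gamma)^2+(n-4)\gamma^2}{3+\gamma^2}
=\frac{3+2\gamma+(n-1)\gamma^2}{3+\gamma^2},
\]
which is less than $1$ precisely when $2\gamma+(n-2)\gamma^2<0$, that is, for $-\tfrac{2}{n-2}<\gamma<0$. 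So the conclusion survives, but the admissible window for $\gamma$ shrinks with $n$, and without the $(n-4)\gamma^2$ term your one-parameter optimization is applied to the wrong function. Insert that term and fix, say, $\gamma=-1/(n-2)$, and your proof is complete.
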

\begin{proof} For $t,q \ge 1$, the characteristic polynomial of the equitable quotient matrix
\begin{equation*}
 N=   \begin{bmatrix}
    n-1 & n-2q-4 &2 &1 & 2q\\
    1 & 5 & 0&0&0\\
    1 & 0 & 2&1&0\\
    1&0&2&3&0\\
    1&0&0&0&3
    \end{bmatrix}.
\end{equation*}is $$g(\lambda)=\lambda^5-(n+12)\lambda^4+(12n + 47)\lambda^3+(4q - 51n - 52)\lambda^2+(88n - 20q - 48)\lambda+16q - 48n + 72.$$
From $g(1)=8>0$ and
$g(0)=16q - 48n + 72<-32n+72<0$,
we deduce the existence of an eigenvalue of $N$ lying in the interval $(0,\,1)$. By Lemma \ref{equitable},  $\sigma_n(G^*)<1$.

Cases $q=0$ or $t=0$ simplify the previous computation, and lead to the same conclusion.
\end{proof}

In what follows, we will use the following setting. Let \begin{equation}\label{eq:G}G\cong K_1\vee (P_l\cup G_1)\end{equation}
be a graph with   $V(G)=\{u_1,u_2,\ldots,u_n\}$, where  $d_G(u_n)=n-1$. The vertices of $P_l$ are denoted by $u_1, u_2,\ldots, u_l$, in the natural order. Also, $G_1$ denotes for any graph.  Let   ${\boldsymbol \alpha}=(\alpha_1,\alpha_2,\ldots,\alpha_n)^{\intercal}$ be the Perron eigenvector (positive unit eigenvector associated with the largest eigenvalue) of $G$ such that $\alpha_i$  corresponds to vertex $u_i$, $1\leq i\leq n$.

\begin{lemma}{\rm\cite{Ye2024two}}\label{special-lem}  Under the above notation, the following statements hold:
\begin{itemize}
\item[(i)] If  $l\geq 2$, then  $\alpha_i=\alpha_{l+1-i}$ holds for  $1\leq i\leq \left\lfloor\frac{l}{2}\right\rfloor$;
\item[(ii)] If  $l\geq 6$, $\alpha_i \neq \alpha_{i-2}$ holds for $3\le i\le \lfloor\frac{l}{2}\rfloor$;
\item[(iii)] If  $l\geq 4$,  $\alpha_i \neq \alpha_{i+1}$  holds for  $1\le i\le \lfloor\frac{l}{2}\rfloor-1$;
\item[(iv)] If $l \ge 3$, then $\alpha_1<\alpha_2$.
\end{itemize}
\end{lemma}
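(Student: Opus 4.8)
\textbf{Proof plan for Lemma~\ref{special-lem}.}

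The plan is to work with the eigenvalue equation $Q(G){\boldsymbol\alpha}=\sigma_1(G){\boldsymbol\alpha}$ restricted to the path vertices $u_1,\dots,u_l$, exploiting that $u_n$ (the apex of the join) is adjacent to all of them with the same eigenvector entry $\alpha_n$. Writing $\rho=\sigma_1(G)$, the apex contributes a constant to each path equation, so for an interior path vertex $u_i$ (with $2\le i\le l-1$) we get $(\rho-3)\alpha_i=\alpha_{i-1}+\alpha_{i+1}+\alpha_n$, while the two endpoints $u_1,u_l$ satisfy $(\rho-2)\alpha_1=\alpha_2+\alpha_n$ and $(\rho-2)\alpha_l=\alpha_{l-1}+\alpha_n$. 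These are a single inhomogeneous second-order linear recurrence together with boundary conditions; by Lemma~\ref{PFtheorem}(iii) all $\alpha_i>0$, and since $\rho=\sigma_1(G)>n(G)-1\ge 4$ (indeed $\rho$ is large) the homogeneous part has characteristic roots that are positive reals $\mu,1/\mu$ with $\mu\in(0,1)$.

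For part~(i), I would set $\beta_i=\alpha_i-\alpha_{l+1-i}$ and observe that $\beta$ satisfies the \emph{homogeneous} version of the same recurrence (the $\alpha_n$ terms cancel) with boundary relations forcing $\beta$ to vanish: the uniqueness of the Perron vector, or equivalently the fact that the only solution of this homogeneous system with the given boundary conditions is the zero solution (the recurrence matrix is nonsingular for $\rho$ outside the $Q$-spectrum of $P_l$, and $\rho>\sigma_1(P_l)$), yields $\beta\equiv 0$, i.e. the symmetry $\alpha_i=\alpha_{l+1-i}$. For parts~(ii) and~(iii), the idea is to analyze monotonicity and concavity of the sequence $(\alpha_i)$ on the first half $1\le i\le\lfloor l/2\rfloor$. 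From $(\rho-3)\alpha_i=\alpha_{i-1}+\alpha_{i+1}+\alpha_n$ with $\rho-3>1$ one rewrites $\alpha_{i+1}-\alpha_i=(\rho-4)\alpha_i-\alpha_n-(\alpha_i-\alpha_{i-1})$, which lets one run an induction showing the forward differences $\delta_i:=\alpha_{i+1}-\alpha_i$ are strictly positive on the first half (giving~(iii)), using the endpoint equation at $u_1$ as the base case and the symmetry from~(i) to control the behaviour near the midpoint; a parallel argument on second differences $\alpha_i-2\alpha_{i-2}$-type expressions (or directly on $\alpha_i-\alpha_{i-2}$, again a homogeneous-type recurrence after telescoping) gives the strict inequality in~(ii). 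Part~(iv) is the base case $\delta_1>0$ of the~(iii) induction: from $(\rho-2)\alpha_1=\alpha_2+\alpha_n$ and $\alpha_n>0$ we get $\alpha_2=(\rho-2)\alpha_1-\alpha_n$, and one must check $\alpha_2>\alpha_1$, i.e. $(\rho-3)\alpha_1>\alpha_n$; when $l\ge 3$ this follows by comparing with the equation at $u_2$ (or $u_1$) and using positivity, since otherwise $\alpha_2\le\alpha_1$ would propagate along the path and contradict symmetry.

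The main obstacle I anticipate is making the monotonicity/concavity inductions in~(ii) and~(iii) genuinely rigorous near the \emph{middle} of the path: the recurrence only pins down interior vertices, and the behaviour at $i=\lfloor l/2\rfloor$ differs according to the parity of $l$, so one has to combine the symmetry from~(i) with careful sign bookkeeping to close the induction without an off-by-one error. A secondary subtlety is justifying that $\rho$ is not a $Q$-eigenvalue of the path $P_l$ (needed for the nonsingularity used in~(i)); this follows from Lemma~\ref{26l} or from $\rho=\sigma_1(G)>\sigma_1(P_l)$, since deleting the apex and its edges leaves $P_l\cup G_1$ whose largest $Q$-eigenvalue is at most $\sigma_1(G)$, with strict inequality because $G$ is connected with strictly larger maximum degree. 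I would assume Lemma~\ref{PFtheorem} for positivity and simplicity of $\rho$ throughout, and invoke Lemma~\ref{lambda1-lem} or Lemma~\ref{kappa1-lem} only if a variational comparison is needed to pin down the strict inequalities.
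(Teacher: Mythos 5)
First, a point of comparison: the paper does not prove this lemma at all --- it is imported verbatim from \cite{Ye2024two} with a citation --- so there is no in-paper argument to measure your plan against; I can only judge whether the plan would close. Part~(i) is fine: the path-reversal map extends to an automorphism of $G$ fixing $u_n$ and $G_1$, and simplicity of $\sigma_1$ for the irreducible matrix $Q(G)$ (Lemma~\ref{PFtheorem}(ii)) forces the Perron vector to be invariant; your homogeneous-recurrence version of this is a correct, if longer, route. The genuine gap is in the induction you propose for (ii)--(iv). Your rearrangement of the interior equation is not correct as written: from $(\rho-3)\alpha_i=\alpha_{i-1}+\alpha_{i+1}+\alpha_n$ one gets $\delta_i:=\alpha_{i+1}-\alpha_i=(\rho-5)\alpha_i+\delta_{i-1}-\alpha_n$, not $(\rho-4)\alpha_i-\alpha_n-\delta_{i-1}$. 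More importantly, even in the correct form the step $\delta_{i-1}>0\Rightarrow\delta_i>0$ requires $(\rho-5)\alpha_i\ge\alpha_n$, and this is not available: $\alpha_n$ is the entry at the dominating vertex and is comparable to the sum of the path entries divided by $\rho-(n-1)\in(1,3)$, so $(\rho-5)\alpha_i-\alpha_n$ has no a priori sign. The induction therefore does not close, and the same obstruction hits the second-difference argument for~(ii) and the base case~(iv).

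The fix is to abandon the induction and solve the recurrence explicitly, which is where your ``characteristic roots $\mu,1/\mu$'' remark should lead. With $\mu+\mu^{-1}=\rho-3$ and $0<\mu<1$ (valid since $\rho>5$), the general solution of the inhomogeneous recurrence is $\alpha_i=A\mu^i+B\mu^{-i}+c$ with $c=\alpha_n/(\rho-5)>0$; the symmetry from~(i) forces $B=A\mu^{l+1}$, so $\alpha_i=A\bigl(\mu^i+\mu^{l+1-i}\bigr)+c$. The endpoint condition $(\rho-2)\alpha_1=\alpha_2+\alpha_n$ is equivalent to extending the recurrence with $\alpha_0=-\alpha_1$, which yields $A(1+\mu)(1+\mu^l)=-2c$, hence $A<0$. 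Then every difference in the lemma factors with an explicit sign: $\alpha_{i+1}-\alpha_i=A(1-\mu)\bigl(\mu^{l-i}-\mu^{i}\bigr)>0$ for $i<l-i$, which gives~(iii) and~(iv), and $\alpha_i-\alpha_{i-2}=A(1-\mu^2)\bigl(\mu^{l+1-i}-\mu^{i-2}\bigr)\neq0$ for $2i\neq l+3$, which gives~(ii). (One must also rule out $A=0$, but $A=0$ forces $c(\rho-3)=\alpha_n=c(\rho-5)$, a contradiction.) This closed-form route delivers strict monotonicity on the first half of the path, strictly more than the stated non-equalities, and avoids the parity bookkeeping near the midpoint that you flagged as the main obstacle.
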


Here is an extension.

\begin{lemma}\label{PathCycle-lem}In addition to the previous lemma:
\begin{itemize}
\item[(i)] If $l\ge 5$, then
$$\sigma_1(G)<\sigma_1(K_{1}\vee (C_{s}\cup P_{l-s}\cup G_1))$$ holds for $3\leq s\leq l-2;$
\item[(ii)] If $l=4$, then
$$\sigma_1(G)<\sigma_1(K_{1}\vee (C_{2}\cup P_{2}\cup G_1)),$$
where $C_2$ is the digon.
\end{itemize}
\end{lemma}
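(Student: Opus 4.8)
The plan is to apply Lemma~\ref{kappa1-lem} to $H:=K_1\vee(C_s\cup P_{l-s}\cup G_1)$ (with $C_2$ read as a digon in part~(ii)), using as a test vector a carefully chosen rearrangement of the Perron eigenvector $\boldsymbol\alpha=(\alpha_1,\dots,\alpha_n)^\intercal$ of $G$ from~\eqref{eq:G}. Since $G$ is connected, $\boldsymbol\alpha$ is positive, and by Lemma~\ref{special-lem}(i) it is symmetric along the path: $\alpha_i=\alpha_{l+1-i}$ for $1\le i\le l$. Write $l-s$ as $2k$ or $2k+1$ according to its parity; since $3\le s\le l-2$ (resp.\ $s=2$, $l=4$), we have $l-s\ge2$, hence $k\ge1$. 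I would define a unit vector $\boldsymbol x$ on $V(H)$ by putting $\alpha_n$ on the dominating vertex, keeping the restriction of $\boldsymbol\alpha$ to $G_1$ unchanged, placing along $P_{l-s}$ the values $\alpha_1,\alpha_2,\dots,\alpha_k,\alpha_k,\dots,\alpha_2,\alpha_1$ (even case) or $\alpha_1,\dots,\alpha_k,\alpha_{k+1},\alpha_k,\dots,\alpha_1$ (odd case), and placing on $C_s$ the remaining ``inner'' values $\alpha_{k+1},\dots,\alpha_{l-k}$ (even case) or $\alpha_{k+2},\dots,\alpha_{l-k}$ (odd case) in cyclic index order. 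Then $\boldsymbol x$ is a permutation of the entries of $\boldsymbol\alpha$, hence a positive unit vector.

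The core of the argument is a comparison of Rayleigh quotients. Because the dominating vertex of $H$ is adjacent to all remaining vertices and $\boldsymbol x$ coincides with $\boldsymbol\alpha$ on $G_1$, the edges incident with the dominating vertex and the edges inside $G_1$ contribute the same amount to $\boldsymbol x^\intercal Q(H)\boldsymbol x=\sum_{uv\in E(H)}(x_u+x_v)^2$ as the corresponding edges of $G$ contribute to $\sigma_1(G)=\boldsymbol\alpha^\intercal Q(G)\boldsymbol\alpha$. Hence, by Lemma~\ref{kappa1-lem},
\[
\sigma_1(H)\ \ge\ \boldsymbol x^\intercal Q(H)\boldsymbol x\ =\ \sigma_1(G)+\Big(\sum_{uv\in E(C_s\cup P_{l-s})}(x_u+x_v)^2-\sum_{i=1}^{l-1}(\alpha_i+\alpha_{i+1})^2\Big),
\]
where the last sum is the contribution of the path edges of $G$. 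Expanding the two ``core'' sums and repeatedly using $\alpha_i=\alpha_{l+1-i}$ to fold $\sum_{i=1}^{l-1}(\alpha_i+\alpha_{i+1})^2$ around its centre, I expect the bracketed quantity to collapse to $2(\alpha_k-\alpha_{k+1})^2$ in the even case and to $0$ in the odd case. (The digon in part~(ii) is the even subcase $k=1$, where the two cycle-vertices are joined by two edges; this leaves the identity unchanged.)

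In the even case the proof then concludes immediately: from $l-s=2k$ together with $s\ge3$ (resp.\ $s=2$) one checks $1\le k\le\lfloor l/2\rfloor-1$, so Lemma~\ref{special-lem}(iii) gives $\alpha_k\ne\alpha_{k+1}$ and therefore $\sigma_1(H)>\sigma_1(G)$; in particular part~(ii) is done. In the odd case one only gets $\boldsymbol x^\intercal Q(H)\boldsymbol x=\sigma_1(G)$, and here I would invoke the equality clause of Lemma~\ref{kappa1-lem}: were $\sigma_1(H)=\boldsymbol x^\intercal Q(H)\boldsymbol x$, then $\boldsymbol x$ would be an eigenvector of $Q(H)$ for $\sigma_1(H)$; since $H$ is connected, $Q(H)$ is non-negative and irreducible, so by Lemma~\ref{PFtheorem} its largest eigenvalue is simple with a positive eigenvector, forcing $\boldsymbol x$ to be the Perron eigenvector of $H$. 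But the Perron eigenvector of $H$ is constant on $V(C_s)$ (it is invariant under the rotations of $C_s$, which fix everything outside the cycle), whereas $\boldsymbol x$ is not: $C_s$ receives the $s\ge3$ consecutively indexed values $\alpha_{k+2},\dots,\alpha_{l-k}$, and a short inspection of the case $l-s=2k+1$, $s\ge3$ (so $l\ge2k+4$) shows that Lemma~\ref{special-lem}(iii) forbids them all to coincide. This contradiction yields $\sigma_1(H)>\boldsymbol x^\intercal Q(H)\boldsymbol x=\sigma_1(G)$, as required.

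The step I expect to be the main obstacle is the bookkeeping in the ``core'' computation: one must verify that the $s$ edges of the folded cycle and the $l-s-1$ edges of the folded path recombine, via $\alpha_i=\alpha_{l+1-i}$, into exactly the $l-1$ path edges of $G$ plus the error term $2(\alpha_k-\alpha_{k+1})^2$, with proper care for the parity of $l$ and for the degenerate boundary cases $l-s=2$ and $s=3$. The secondary point, namely that in the odd case $\boldsymbol x$ is genuinely non-constant on $C_s$, is elementary but must be checked for those same small values of $s$.
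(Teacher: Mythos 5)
Your proposal is correct and is essentially the paper's own argument in a different guise: the paper keeps the vertex labels of $G$ and rewires the middle of the path into the cycle, then evaluates the Rayleigh quotient of $Q(\widetilde{G})$ at the Perron vector $\boldsymbol\alpha$ of $G$, which is exactly your permuted test vector; the resulting error terms ($2(\alpha_k-\alpha_{k+1})^2$ in one parity class, $0$ in the other, with Lemma~\ref{special-lem}(iii) supplying strictness) match the paper's computation. The only genuine divergence is the degenerate (zero-difference) case: the paper writes out the eigenvalue equation of $Q(\widetilde{G})$ at one vertex and contradicts Lemma~\ref{special-lem}(ii), whereas you invoke rotation-invariance of the Perron eigenvector on $V(C_s)$ together with Lemma~\ref{special-lem}(iii); both are valid, yours avoids the local eigen-equation at the cost of the small case analysis (for $s\in\{3,4\}$) needed to certify that your vector is non-constant on the cycle.
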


\begin{proof} We first prove (ii). Denote $G^*\cong K_{1}\vee (C_{2}\cup P_{2}\cup G_1)$. It is easy to see that $G^*\cong G-u_1u_2-u_3u_4+u_1u_4+u_2u_3$, where $G$ is given by \eqref{eq:G}. Now, invoking Lemmas \ref{lambda1-lem}, \ref{kappa1-lem} and \ref{special-lem}, we obtain
$$\sigma_1(G^*)-\sigma_1(G)\geq {\boldsymbol \alpha}^\intercal Q(G^*){\boldsymbol \alpha}-{\boldsymbol
	\alpha}^\intercal Q(G){\boldsymbol \alpha}=2(\alpha_{3}-\alpha_{1})(\alpha_{2}-\alpha_{4})=2(\alpha_{2}-\alpha_{1})^2>0,$$
as desired.

(i): Suppose that $\widetilde{G}\cong K_{1}\vee (C_{s}\cup P_{l-s}\cup G_1)$. In this case, we restrict our attention to the subcase where $l$ is even, as the alternative  can be proved by analogous arguments.

Suppose $l=2a$ with $a\ge 3$. By Lemma~\ref{special-lem}, we have
$\alpha_{a-j}=\alpha_{a+j+1}$,  $0\leq j\leq a-1$.

If
$s=2k+1$ ($k\ge 1$), then $a\geq k+2$ follows from $l-s\geq
2$. Obviously, $$\widetilde{G}\cong
G-u_{a-k+1}u_{a-k}-u_{a+k+1}u_{a+k+2}+u_{a-k+1}u_{a+k+1}+u_{a-k}u_{a+k+2}.$$
Note that $\alpha_{a-k}=\alpha_{a+1+k}$, $\alpha_{a+1-k}=\alpha_{a+k}$ and $\alpha_{a-k-1}=\alpha_{a+k+2}$. So, by Lemma \ref{kappa1-lem},
   \begin{eqnarray*}
\sigma_1(\widetilde{G})-\sigma_1(G)&\geq &{\boldsymbol \alpha}^\intercal Q(\widetilde{G}){\boldsymbol \alpha}-{\boldsymbol \alpha}^\intercal Q(G){\boldsymbol \alpha}    \nonumber\\
&=&2\alpha_{a+1+k}\alpha_{a-k+1}+2\alpha_{a+k+2}\alpha_{a-k}-2\alpha_{a-k}\alpha_{a-k+1}-2\alpha_{a+k+1}\alpha_{a+k+2}   \nonumber\\
&=&2\alpha_{a-k}\alpha_{a-k+1}+2\alpha_{a-k-1}\alpha_{a-k}-2\alpha_{a-k}\alpha_{a-k+1}-2\alpha_{a-k}\alpha_{a-k-1}\nonumber\\
&=&0.\nonumber
\end{eqnarray*}
If the equality holds, then
${\boldsymbol \alpha}$ is  the Perron eigenvector of $\sigma_1(\widetilde{G})$.  Moreover,
$$3\alpha_{a-k}+\alpha_{a-k-1}+\alpha_{a+k+2}=3\alpha_{a-k}+\alpha_{a-k-1}+\alpha_{a-k+1},$$ which yields
$\alpha_{a-k+1}=\alpha_{a+k+2}=\alpha_{a-k-1}$. However, this contradicts the statement of Lemma~\ref{special-lem}(ii).

On the other hand, if $s=2k$ $(k\ge 2)$, then $a\geq
k+1$ deduces from $l-s\geq 2$. Moreover, $$\widetilde{G}\cong
G-u_{a-k+1}u_{a-k}-u_{a+k}u_{a+k+1}+u_{a-k+1}u_{a+k}+u_{a-k}u_{a+k+1}.$$
Combining with Lemma~\ref{special-lem}, we arrive at
    \begin{eqnarray*}
\sigma_1(\widetilde{G})-\sigma_1(G)&\geq &{\boldsymbol \alpha}^\intercal Q(\widetilde{G}){\boldsymbol \alpha}-{\boldsymbol \alpha}^\intercal Q(G){\boldsymbol \alpha}    \nonumber\\
&=&2\alpha_{a+1-k}\alpha_{a+k}+2\alpha_{a-k}\alpha_{a+k+1}-2\alpha_{a-k}\alpha_{a-k+1}-2\alpha_{a+k}\alpha_{a+k+1}   \nonumber\\
&=&2\alpha_{a+1-k}^{2}+2\alpha_{a-k}^{2}-4\alpha_{a-k}\alpha_{a-k+1} \nonumber\\
&=&2(\alpha_{a+1-k}-\alpha_{a-k})^{2}\nonumber\\
&>&0,\nonumber
\end{eqnarray*}
 because $\alpha_{a+1-k}\neq \alpha_{a-k}$, and the proof is completed.
\end{proof}

\begin{remark}\label{PathCycle-r} It is straightforward to verify that if $G_1$ is a multigraph, then Lemmas~\ref{special-lem} and~\ref{PathCycle-lem} remain valid, since the matrices $Q(G)$, $Q(\widetilde{G})$, and $Q(G^*)$ continue to be non-negative, irreducible, real, and symmetric.
\end{remark}

\section{Proof  of Theorem~\ref{12t}}\label{sec4}

Let $G$ be as in the formulation of Theorem~\ref{12t}. A particular case is resolved in \cite{20}: For $q=0$, the wheel graph $G$ is DQS. Therefore, we suppose that $q\ge 1$. Let further $H$ be a simple graph  that is $Q$-cospectral with~$G$. We set  $n(H)=n(G)=n$.

Based on Lemma~\ref{l3.1} and Remark~\ref{r3.2}, the following ordering of eigenvalues is deduced:
\[
1 = \sigma_n(H) < 3 < \sigma_2(H) < 5 < n < \sigma_1(H).
\]

For brevity, denote $n_i(G)$, $d_j(G)$, $n_i(H)$, and $d_j(H)$ by $n_i$, $d_j$, $\widetilde{n}_i$, and $\widetilde{d}_j$, respectively, where $0 \leq i \leq n-1$ and $1 \leq j \leq n$.

The following result is established first.

\begin{lemma}\label{l4.1} If $n\ge 16$, then $G$ and $H$ share the same vertex degrees and $\varsigma_H(C_3)=\varsigma_G(C_3)$.
\end{lemma}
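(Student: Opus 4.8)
The plan is to extract degree and triangle-count information from the first three $Q$-spectral moments, which by Lemma~\ref{TG-lem} are determined by the spectrum and hence equal for $G$ and $H$. From $T_1(G)=T_1(H)$ we get $\sum \widetilde d_j = \sum d_j = 2m$, and from $T_2$ that $\sum \widetilde d_j^2 = \sum d_j^2$; combining with $T_3$ yields $6\varsigma_H(C_3) + \sum \widetilde d_j^3 + 3\sum \widetilde d_j^2 = 6\varsigma_G(C_3)+\sum d_j^3+3\sum d_j^2$, so $6\varsigma_H(C_3)-6\varsigma_G(C_3)=\sum d_j^3-\sum \widetilde d_j^3$. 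Thus it suffices to pin down the degree sequence of $H$ and show $\sum \widetilde d_j^3=\sum d_j^3$; then the triangle counts agree automatically.

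The first step is to control the structure of $H$. Since $n\ge 16$ and the eigenvalue ordering $\sigma_1(H)>n>5>\sigma_2(H)\ge\sigma_n(H)>0$ holds, Lemma~\ref{d1-lem} applies: $H$ is connected, $\widetilde d_2\le 4$, and $\widetilde d_1=d_1=n-1$ (note that for $G=K_1\vee(C_s\cup qK_2)$ we have $d_1=n-1$, which is in $\{n-1,n-2\}$). So $H$ has a vertex $w$ of full degree $n-1$, and all other vertices of $H$ have degree at most $4$. The degrees of $G$ itself are: one vertex of degree $n-1$, the $2q$ vertices of the $K_2$'s have degree $3$ (two from the join plus one inside $K_2$), and the $s$ cycle vertices have degree $4$ (two from the join plus two in $C_s$); so $G$ has degree sequence $(n-1,\,4^{(s)},\,3^{(2q)})$ with $s+2q=n-1$. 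Writing $H$'s degree multiset as $(n-1,\,4^{(a_4)},\,3^{(a_3)},\,2^{(a_2)},\,1^{(a_1)})$ with $a_4+a_3+a_2+a_1=n-1$, the two moment equations $T_1,T_2$ become two linear-quadratic constraints on $(a_1,a_2,a_3,a_4)$. The plan is to solve this small system: $T_1$ gives $4a_4+3a_3+2a_2+a_1=4s+3\cdot 2q$, and $T_2$ (after subtracting $2m$, which is common) gives $16a_4+9a_3+4a_2+a_1=16s+9\cdot 2q$. Subtracting, $12a_4+6a_3+3a_2=12s+6\cdot 2q$, i.e. $4a_4+2a_3+a_2=4s+4q$. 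Combined with $a_4+a_3+a_2+a_1=n-1=s+2q$, one expects the only nonnegative integer solution — using $T_3$ as the third equation, with the crucial input that $\varsigma_H(C_3)\ge 0$ — to force $a_2=a_1=0$, $a_4=s$, $a_3=2q$, and $\varsigma_H(C_3)=\varsigma_G(C_3)$.

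The main obstacle is the last step: closing the gap between "the linear system admits a one-parameter family of degree sequences" and "only the degree sequence of $G$ is realizable." The two linear equations from $T_1,T_2$ do not determine $(a_1,a_2,a_3,a_4)$ uniquely; one must bring in $T_3$ together with the nonnegativity of triangle counts and, if needed, a counting bound on triangles in a graph with a dominating vertex and all other degrees $\le 4$ (each triangle through $w$ corresponds to an edge among the degree-$\le 4$ vertices, and triangles avoiding $w$ are limited). Concretely, the plan is to write $\varsigma_H(C_3)-\varsigma_G(C_3)=\tfrac16(\sum d_j^3-\sum\widetilde d_j^3)$ as an affine function of the free parameter in the degree-sequence family, observe its sign, and match it against the a priori bounds $0\le \varsigma_H(C_3)\le$ (number of edges of $H-w$) $+$ (small correction), to squeeze the parameter to the single value corresponding to $G$. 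Handling the corner cases where $3+2\cos(2j\pi/s)$ hits $1$ or $3$, and where small $q$ makes some multiplicities vanish, requires care but changes nothing essential. Once the degree sequence of $H$ equals that of $G$, $\sum\widetilde d_j^3=\sum d_j^3$ and hence $\varsigma_H(C_3)=\varsigma_G(C_3)$, completing the proof.
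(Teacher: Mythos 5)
Your overall strategy---equating the first three $Q$-spectral moments via Lemma~\ref{TG-lem} and solving for the degree multiset of $H$---is the same as the paper's, but there are two genuine problems. First, a concrete error: the join here is with $K_1$, a single vertex, so each vertex of $C_s\cup qK_2$ gains exactly \emph{one} edge from the join, not two. The degree sequence of $G$ is therefore $\big(n-1,\,3^{(s)},\,2^{(2q)}\big)$, not $\big(n-1,\,4^{(s)},\,3^{(2q)}\big)$, so your moment equations are set up with the wrong right-hand sides (and the subtraction $T_2-T_1$ also contains an arithmetic slip: $4a_2-2a_2=2a_2$, not $3a_2$).

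Second, and more importantly, the step you yourself flag as ``the main obstacle''---passing from the underdetermined linear system to the unique realizable degree sequence---is exactly the step you leave unproved, and the detour you sketch through $T_3$, nonnegativity of $\varsigma_H(C_3)$, and an upper bound on triangles through the dominating vertex is both unnecessary and not carried out. The paper closes this gap with pure linear algebra: writing $\widetilde n_i$ for the number of degree-$i$ vertices of $H$ other than the dominating vertex (all such degrees lie in $\{1,2,3,4\}$ by connectivity and $\widetilde d_2\le 4$), the vertex count together with $T_1$ and $T_2$ yields three equations whose appropriate linear combination is $2\widetilde n_1+2\widetilde n_4=\sum_{i\ge 2}(\widetilde d_i-2)(\widetilde d_i-3)=\sum_{i\ge 2}(d_i-2)(d_i-3)=0$, because every non-dominating degree of $G$ equals $2$ or $3$. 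Nonnegativity then forces $\widetilde n_1=\widetilde n_4=0$, after which $\widetilde n_3=s$ and $\widetilde n_2=2q$ follow at once, and only then is the $T_3$ identity invoked to conclude $\varsigma_H(C_3)=\varsigma_G(C_3)$. As written, your argument establishes neither the degree sequence nor the equality of triangle counts.
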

\begin{proof} From Lemma~\ref{d1-lem}, we know that $H$ is connected with $\widetilde{d}_1=n-1$ and $\widetilde{d}_2\le 4$ (this inequality will be frequently used) when $n\ge 16$. By inserting $n_3=s$, $n_2=2q$, $n_1=n_4=0$  and  $d_1=\widetilde{d}_1=n-1$ in Lemma~\ref{TG-lem}, we arrive at	
	\begin{equation}\label{e4.1}\left\{
		\begin{array}{ll}  \widetilde{n}_1+\widetilde{n}_2+\widetilde{n}_3+\widetilde{n}_4=2q+s,\\
            \widetilde{n}_1+2\widetilde{n}_2+3\widetilde{n}_3+4\widetilde{n}_4=4q+3s,\\
			\widetilde{n}_1+4\widetilde{n}_2+9\widetilde{n}_3+16\widetilde{n}_4=8q+9s,\\
            \widetilde{n}_1+8\widetilde{n}_2+27\widetilde{n}_3+64\widetilde{n}_4+6\varsigma_H(C_3)=16q+27s+6\varsigma_G(C_3).
		\end{array}
		\right.\end{equation}

From the first two equalities, we have
\begin{equation}\label{e4.2}
-\widetilde{n}_1+\widetilde{n}_3+2\widetilde{n}_4=s.
\end{equation}
Besides, from the first and the  third equality in \eqref{e4.1}, we deduce
\begin{equation}\label{e4.3}
-3\widetilde{n}_1+5\widetilde{n}_3+12\widetilde{n}_4=5s.
\end{equation}
Now combining equalities \eqref{e4.2} and \eqref{e4.3}, we immediately obtain
\begin{equation*}\label{e4.4}
\widetilde{n}_1+\widetilde{n}_4=0.
\end{equation*}
Further by \eqref{e4.2}, we have $\widetilde{n}_3=s$. Moreover, from the first equality in  \eqref{e4.1}, we obtain $\widetilde{n}_2=2q$. Therefore, $G$ and $H$  share vertex degrees. Now, from the last equality in  \eqref{e4.1}, we arrive at $\varsigma_H(C_3)=\varsigma_G(C_3)$. Obviously, if $s\ge 4$, then $\varsigma_H(C_3)=\varsigma_G(C_3)=s+q$. Otherwise, that is, for $s=3$, we have $\varsigma_H(C_3)=\varsigma_G(C_3)=s+1+q=q+4$.
\end{proof}

Now we eliminate all but two possibilities for $H$.

\begin{lemma}\label{l4.2} For $n\geq 16$, the only structural possibilities for $H$ are
	$$K_1\vee (C_{k}\cup P_{l_1}\cup P_{l_2}\cup\cdots \cup P_{l_q}) \quad \text{and} \quad K_1\vee (P_{l_1}\cup P_{l_2}\cup\cdots \cup P_{l_q}),$$ where $3\le k\le s$ and $l_1\ge l_2\ge \cdots \ge l_r \ge 3>l_{r+1}=\cdots =l_q=2$, with $0 \leq r \leq q$ in the former case and $1 \leq r \leq q$ in the latter.
\end{lemma}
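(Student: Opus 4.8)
The plan is to start from what Lemma~\ref{l4.1} already gives us: $H$ is connected, has exactly one vertex $v_1$ of degree $n-1$, has $\widetilde{n}_3=s$ vertices of degree~$3$ and $\widetilde{n}_2=2q$ vertices of degree~$2$ (and no vertices of degree~$1$ or~$4$, hence no vertex of degree exceeding $4$ apart from $v_1$), and that $\varsigma_H(C_3)=\varsigma_G(C_3)$. Write $H = K_1 \vee H'$, where $H' = H - v_1$ has order $n-1$, with $s$ vertices of degree~$2$ and $2q$ vertices of degree~$1$ in $H'$ (each degree in $H'$ drops by one). So $H'$ is a disjoint union of paths and cycles: every component of $H'$ has maximum degree at most~$2$, hence is a path or a cycle, and the number of degree-$1$ vertices of $H'$ being $2q$ forces exactly $q$ path components (each path contributing two endpoints, an isolated vertex being impossible since $H'$ has no degree-$0$ vertex — indeed $\widetilde{n}_1=\widetilde{n}_0$ issue: a degree-$1$ vertex of $H$ would be degree-$0$ in $H'$; but $\widetilde{n}_1 = 0$, so no isolated vertices in $H'$). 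Thus $H' = C_{k_1}\cup\cdots\cup C_{k_p}\cup P_{l_1}\cup\cdots\cup P_{l_q}$ for some cycles and some paths, each $l_j\ge 2$.

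Next I would use the triangle count. Each $C_3$ in $H'$, together with $v_1$, yields a $K_4$; each edge of $H'$ together with $v_1$ yields a triangle; no triangle of $H$ can avoid $v_1$ except the cycle components $C_3$ of $H'$ themselves. So $\varsigma_H(C_3) = m(H') + c_3(H')$, where $m(H')$ is the number of edges of $H'$ and $c_3(H')$ the number of $C_3$-components. Since the degrees are pinned down, $m(H') = \frac{1}{2}(2\cdot\widetilde{n}_2 + 1\cdot\widetilde{n}_1^{H'}) = \frac{1}{2}(2s + 2q) = s+q$ — wait, more carefully $m(H') = \frac12\sum d_{H'} = \frac12(2s+2q)$; consistency of $H$ with $G$ already gave $m(H)=m(G)$. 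The point is: $\varsigma_H(C_3) = (s+q) + c_3(H')$, and comparing with $\varsigma_G(C_3)$ computed in Lemma~\ref{l4.1} ($=s+q$ if $s\ge4$, $=q+4$ if $s=3$) forces $c_3(H') = 0$ when $s\ge 4$ and $c_3(H')=1$ when $s=3$. Combined with the count of degree-$2$ vertices, which equals the total length of all cycles plus $\sum(l_j-2)$ over the paths, i.e. $\sum k_i + \sum l_j - 2q = s$, and with $\sum k_i + \sum l_j = (n-1)$... I need to be careful, but the degree bookkeeping plus $\sum l_j - (\text{number of paths}) \cdot 1$... Let me instead argue: the number of degree-$2$ vertices of $H'$ is $\big(\sum k_i\big) + \sum_j (l_j - 2) = s$. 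So $\sum k_i + \sum l_j - 2q = s$.

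The heart of the lemma is showing $H'$ has \emph{at most one} cycle component. This is where Lemma~\ref{PathCycle-lem} enters, but it eliminates in the wrong direction (it says replacing a path-segment by a cycle \emph{raises} $\sigma_1$); so the argument must be the contrapositive flavour: if $H'$ had two or more cycles, I would splice two cycle components $C_{k_i}, C_{k_{i'}}$ (or a cycle and a path) by a $2$-switch into a single longer cycle plus a path, strictly \emph{decreasing} $\sigma_1$ by Lemma~\ref{PathCycle-lem} applied to $K_1 \vee (P_l \cup G_1)$ with $G_1$ absorbing the rest — and then note that $\sigma_1(H) = \lambda_1$ is a \emph{fixed} number determined only by $n$ and $q$ via Lemma~\ref{l3.1}/Corollary~\ref{coro3.4} (here I use that the target configuration, one cycle, realizes exactly $\sigma_1 = \lambda_1$), yielding a contradiction unless there was at most one cycle to begin with. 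I also must rule out cycles of length $2$ (digons): the multigraph version, Corollary~\ref{coro3.4} and Remark~\ref{PathCycle-r}, shows a digon gives the same $\sigma_1$, but $H$ is a \emph{simple} graph, so a $C_2$ component of $H'$ cannot occur; hence any surviving cycle has length $k\ge 3$, and $k\le s$ follows from $\sum k_i + \sum l_j - 2q = s$ with all $l_j \ge 2$ (so $\sum l_j \ge 2q$, giving $k \le s$). Finally, the split into the two cases — whether the lone cycle is present ($0\le r\le q$ paths of length $\ge 3$, former case) or absent ($1\le r \le q$, latter case) — comes from the $s=3$ vs $s\ge 4$ triangle count: when $s=3$ we \emph{need} $c_3(H')=1$, so the cycle is present and is a $C_3 \le C_s$; when $s\ge 4$ the cycle may or may not be present but at least one path must have length $\ge 3$ if no cycle is present, since otherwise all degree-$2$ vertices would have to come from cycles, contradicting $c_3(H')$ constraints and the total count. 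The main obstacle I anticipate is making the ``splice two cycles, strictly decrease $\sigma_1$'' step fully rigorous across all the sub-cases (two cycles; one cycle plus a path when $r<q$; handling parity and the $l=4$ digon boundary case of Lemma~\ref{PathCycle-lem}), and cleanly invoking Corollary~\ref{coro3.4} to pin $\sigma_1$ so that the strict decrease is genuinely contradictory — but since all these tools are already in hand, it should go through by a finite case analysis.
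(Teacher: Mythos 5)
Your derivation of the path/cycle decomposition of $H-v_1$ from the degree data of Lemma~\ref{l4.1} (no vertices of degree $1$ or $4$ besides $v_1$, exactly $2q$ vertices of degree $2$, hence exactly $q$ path components and the rest cycles) matches the paper and is fine. The genuine gap is in the heart of the argument, the claim that $H-v_1$ has at most one cycle component. You propose to splice two cycle components into one longer cycle (plus a path) and derive a contradiction from a strict decrease of $\sigma_1$, anchored by Corollary~\ref{coro3.4}. This cannot work: Corollary~\ref{coro3.4} (see also Remark~\ref{r3.2}) says precisely that for fixed $n$ and $q$ the largest eigenvalue of $K_1\vee(\text{cycles}\cup qK_2)$ is \emph{independent of the number of cycles and their lengths}, so $K_1\vee(C_{k_1}\cup C_{k_2}\cup qK_2)$ and $K_1\vee(C_{k_1+k_2}\cup qK_2)$ have exactly the same $\sigma_1$. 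There is no strict decrease to exploit, and Lemma~\ref{PathCycle-lem} compares a path with a cycle-plus-shorter-path, not two cycles with one; the invariant you chose is blind to the very feature you need to control. (You flag this step yourself as the anticipated obstacle; it is not a technicality but a dead end.)

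The paper's argument uses the \emph{second} eigenvalue instead: since $t=1$ here, Remark~\ref{r3.2} gives $\sigma_2(H)=\sigma_2(G)<5$, whereas if $H-v_1$ had two cycle components $C_{k_1}$ and $C_{k_2}$, then $H$ would contain $K_1\vee(C_{k_1}\cup C_{k_2})$ as a subgraph, and Lemmas~\ref{26l} and~\ref{21l} (taking $q_j=3=1+d_{C}(u_j)$ on the $k_1+k_2$ cycle vertices) yield $\sigma_2(H)\ge\lambda_2\bigl(I_{k_1+k_2}+Q(C_{k_1}\cup C_{k_2})\bigr)=5$, a contradiction. That single interlacing step replaces your entire splicing analysis. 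A secondary remark: the triangle-count bookkeeping you carry out is not needed for this lemma (it is recorded in Lemma~\ref{l4.1} but plays no role in the structural classification), and the bound $k\le s$ follows as you say from the degree-$2$ count, so that part is harmless but superfluous.
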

\begin{proof} We denote the vertices of $H$ by $v_1, v_2, \ldots, v_n$, and adopt the convention for vertex degrees elaborated in Section~\ref{sec:ir}. By Lemma \ref{l4.1}, $\widetilde{d}_1=n-1$, so we can suppose $d_H(v_1)=n-1$. Besides, $\widetilde{d}_2=3$ and $\widetilde{n}_1=0$. These degree conditions lead to the conclusion that every component of  $H-v_1$ is isomorphic to either $C_k~(k\ge 3)$ or $P_l~(l\ge 2)$. Moreover, from $\sigma_2(H)<5$, $H$ cannot contain $K_1\vee (C_{k_1}\cup C_{k_2})$ as a subgraph; otherwise, Lemmas \ref{26l} and \ref{21l} imply $$\sigma_2(H)\ge \lambda_2(I_{k_1+k_2}+Q(C_{k_1}\cup C_{k_2}))=5.$$ Hence, at most one component of  $H-v_1$ is a cycle. Moreover, $\widetilde{n}_2=2q$ implies that there are exactly $q$ disjoint paths with length at least 2 in  $H-v_1$. Therefore, the lemma holds because $H$ and $G$ share degrees.
\end{proof}

It remains to eliminate the remaining two possibilities, listed in the formulation of the previous lemma. For convenience, we denote $$\widetilde{H}\cong K_1\vee (C_{k}\cup P_{l_1}\cup P_{l_2}\cup\cdots \cup P_{l_q}) \quad \text{and} \quad H^*\cong K_1\vee (P_{l_1}\cup P_{l_2}\cup\cdots \cup P_{l_q}).$$

\medskip\noindent{\em \textbf{Proof of  Theorem \ref{12t}.}} According to the discussion in the beginning of this section, we suppose that $q\geq 1$. We also set $n\geq 16$ (as in the statement of this theorem). We shall prove that the  graph $H$, introduced at the beginning of this section, is isomorphic to $G$. For a contradiction, we suppose $H\not \cong G$. Then by Lemma~\ref{l4.2}, $H \cong \widetilde{H}$ or $H \cong H^*$.
\vspace*{0.1cm}\\
\smallskip\noindent{\textit{Case 1: $H \cong \widetilde{H}$.}}
By Lemma~\ref{l4.2},
we have  $3\le k<s$ and $l_1\ge l_2\ge \cdots \ge l_r \ge 3>l_{r+1}=\cdots =l_q=2$ $(1\le r \le q)$.

\smallskip\noindent{\textit{Subcase 1.1: $l_r \ge 4$.}}
 By Corollary~\ref{coro3.4}, Lemma~\ref{PathCycle-lem} and Remark~\ref{PathCycle-r}, it follows that
  $$\sigma_1(\widetilde{H})<\sigma_1\big(K_1\vee (C_{k}\cup C_{l_1-2}\cup C_{l_2-2}\cup \cdots \cup C_{l_r-2}\cup qK_2)\big)=\sigma_1(G),$$ which is a contradiction.

\smallskip\noindent{\textit{Subcase 1.2: $l_r=3$.}}
Lemmas \ref{26l} and \ref{P3-lem} imply $$\sigma_n(\widetilde{H})\le \sigma_n\big(K_1\vee (P_3\cup C_{k}\cup C_{l_1}\cup \cdots C_{l_{r-1}}\cup (q-r)K_2)\big)<1=\sigma_n(G),$$
violating the assumption on $Q$-cospectrality.

\smallskip\noindent{\textit{Case 2: $H \cong H^*$.}}
 Again, Lemma~\ref{l4.2} gives $l_1\ge l_2\ge \cdots \ge l_r \ge 3>l_{r+1}=\cdots =l_q=2$ $(1\le r \le q)$.
As before, two subcases arise: $l_r \geq 4$ and $l_r = 3$. In the former situation, a similar argument yields
\[
\sigma_1(H^*) < \sigma_1\big(K_1 \vee (C_{l_1 - 2} \cup C_{l_2 - 2} \cup \cdots \cup C_{l_r - 2} \cup qK_2)\big) = \sigma_1(G).
\]
In the latter one, it follows that
\[
\sigma_n(H^*) \leq \sigma_n\big(K_1 \vee (P_3 \cup C_{l_1} \cup \cdots \cup C_{l_{r-1}} \cup (q - r)K_2)\big) < 1 = \sigma_n(G).
\]
Both conclusions contradict the $Q$-cospectrality assumption, completing the proof.
\qed

\section{Proof  of Theorem~\ref{13t}}\label{sec5}

Let $G$ be as in the formulation of Theorem~\ref{13t}. On the basis of \cite{Liu21}, which resolves a particular case, we may suppose that $q\ge 1$ and $t\ge 2$. Let further $H$ be a simple graph  that is $Q$-cospectral with~$G$.

Also by Lemma~\ref{l3.1} and Remark \ref{r3.2}, we deduce the following setting: $$1=\sigma_n(H)<\sigma_2(H)=5<n<\sigma_1(H).$$

Here we adopt the same notation for $n_i(G)$, $d_j(G)$, $n_i(H)$ and $d_j(H)$ as the previous section.  From Lemma~\ref{d1-d2-lem}, we know that when $n\ge 12$, $H$ is connected with $\widetilde{d}_{2}\le 4$.

The first lemma tells us about the multiplicity of $1$ in the $Q$-spectrum of a graph containing vertices that share a particular neighbourhood.

\begin{lemma}{\rm\cite{Ye2025DAM}}\label{eigenvalue1-lem} Let $F$ be a  graph of order $n~(n\ge 2)$ such that $N_F(u_1)=N_F(u_2)=\cdots =N_F(u_s)=\{u_{s+1}\}$,  where $u_i\in V(F)$ $(1\leq i\leq s+1)$ and $s\ge 1$. Then $\mathrm{mul}_{F}(1)\ge s-1$.
\end{lemma}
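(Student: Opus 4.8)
The plan is to exhibit $s-1$ explicit, linearly independent eigenvectors of $Q(F)$ for the eigenvalue $1$, mimicking the pendant-vertex constructions already used in Lemma~\ref{l3.1}. Write $V(F)=\{u_1,u_2,\ldots,u_n\}$ and note that by hypothesis each of $u_1,\ldots,u_s$ has degree exactly $1$, with its unique neighbour being $u_{s+1}$. For a vector $\boldsymbol{x}$ supported on $\{u_1,\ldots,u_s\}$, the only nonzero rows of $(Q(F)-I)\boldsymbol{x}$ are the row indexed by $u_{s+1}$ and the rows indexed by $u_1,\ldots,u_s$; for a degree-$1$ vertex $u_j$ $(1\le j\le s)$ the $u_j$-th row of $Q(F)\boldsymbol{x}$ equals $d_F(u_j)x(u_j)+x(u_{s+1})=x(u_j)$ as soon as $x(u_{s+1})=0$, which already holds since the support avoids $u_{s+1}$. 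Hence $(Q(F)-I)\boldsymbol{x}=0$ on all of those rows automatically, and the $u_{s+1}$-th row gives the single linear condition $\sum_{j=1}^{s}x(u_j)=0$.

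So the concrete step is: for $1\le i\le s-1$ set
$$
\boldsymbol{\eta}_i=\big(0^{(i-1)},1,-1,0^{(n-i-1)}\big)^{\intercal},
$$
i.e. $\eta_i(u_i)=1$, $\eta_i(u_{i+1})=-1$, and all other entries zero. Each $\boldsymbol{\eta}_i$ is supported on $\{u_i,u_{i+1}\}\subseteq\{u_1,\ldots,u_s\}$ and satisfies $\sum_{j=1}^{s}\eta_i(u_j)=0$, so by the computation above $Q(F)\boldsymbol{\eta}_i=\boldsymbol{\eta}_i$. These $s-1$ vectors are linearly independent: the positions of their supports form a staircase, so any nontrivial linear combination $\sum_i c_i\boldsymbol{\eta}_i$ has $u_1$-entry $c_1$, forcing $c_1=0$, then $u_2$-entry $-c_1+c_2=c_2$, forcing $c_2=0$, and so on. Therefore $\mathrm{mul}_F(1)\ge s-1$, as claimed.

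I do not anticipate a genuine obstacle here; the statement is essentially a reformulation of the standard fact that $s$ twins attached to a common neighbour contribute an $(s-1)$-dimensional eigenspace, and the only point requiring a line of care is verifying that the degree-$1$ rows impose no condition beyond the single neighbourhood-sum constraint. One alternative, slightly slicker route would be to invoke Lemma~\ref{PFtheorem} or an equitable-partition argument on the orbit $\{u_1,\ldots,u_s\}$, but the direct eigenvector exhibition is shortest and self-contained, so that is the route I would take.
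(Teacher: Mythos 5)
Your argument is correct: for a vector $\boldsymbol{x}$ supported on the pendant twins $u_1,\ldots,u_s$, the rows of $(Q(F)-I)\boldsymbol{x}$ indexed by $u_1,\ldots,u_s$ vanish automatically (each such vertex has degree $1$ and its only neighbour $u_{s+1}$ lies outside the support), every vertex other than $u_1,\ldots,u_{s+1}$ has no neighbour in the support, and the $u_{s+1}$-row imposes the single constraint $\sum_{j=1}^{s}x(u_j)=0$, so your difference vectors $\boldsymbol{\eta}_1,\ldots,\boldsymbol{\eta}_{s-1}$ are indeed $s-1$ linearly independent eigenvectors for the eigenvalue $1$. There is no in-paper proof to compare against, since Lemma~\ref{eigenvalue1-lem} is quoted from \cite{Ye2025DAM} without proof; your construction is the standard duplicate-vertex argument and is the same device used for the eigenvectors $\boldsymbol{\xi}_j$ associated with the eigenvalue $1$ in the proof of Lemma~\ref{l3.1}.
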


We proceed by proving the following result.

\begin{lemma}\label{l5.1} If $n\ge 52$ or $q\ge 12$, then $\widetilde{d}_{1}=d_1=n-1$.
\end{lemma}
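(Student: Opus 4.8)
We already know from Lemma~\ref{d1-d2-lem} that, for $n\ge 12$, the cospectral mate $H$ is connected with $\widetilde d_2\le 4$, so the only thing to rule out is $\widetilde d_1\le n-2$. Since $G=K_1\vee(C_{s_1}\cup\cdots\cup C_{s_t}\cup qK_2)$ has $d_1(G)=n-1$ and the graphs are $Q$-cospectral, the plan is to derive a contradiction from the assumption $\widetilde d_1\le n-2$ by squeezing $\sigma_1(H)$ from above and comparing it with $\sigma_1(G)$, which by Lemma~\ref{l3.1} and Remark~\ref{r3.2} satisfies $n<\sigma_1(G)<n+2$. First I would record the combinatorial data that spectral moments give us: from Lemma~\ref{TG-lem} applied to both graphs, $2m(H)=2m(G)=2(n-1)+2(n-t-1)=4n-2t-4$ wait — more carefully, $m(G)=(n-1)+\big(\sum_i s_i\big)+q$ and $\sum_i s_i+2q=n-1$, so $m(G)=2(n-1)-q$; hence $\sum_i \widetilde d_i=2m(H)=4n-4-2q$, and $\sum_i\widetilde d_i^2+2m(H)=T_2(G)=\sum_i d_i(G)^2+2m(G)$, where $\sum_i d_i(G)^2=(n-1)^2+4(\sum_i s_i)+q=(n-1)^2+4(n-1)-4q$. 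These two identities, together with $\widetilde d_2\le 4$ and $\widetilde d_1\le n-2$, should force $\sum_{i\ge 2}\widetilde d_i$ and $\sum_{i\ge 2}\widetilde d_i^2$ into a narrow range, so that $\widetilde d_1$ itself is pinned down to be very close to $n-1$ and the number of vertices of degree $3$ and $4$ in $H$ is controlled.

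The core estimate is an upper bound on $\sigma_1(H)$ in terms of $\widetilde d_1$. Here I would use Lemma~\ref{23l}: since $H$ is connected with $\widetilde d_2\le 4$, provided $\widetilde d_1$ is large enough relative to $\widetilde d_n$ (the hypotheses $\widetilde d_1\ge 8>\widetilde d_n\ge 2$ or $\widetilde d_1\ge 11>1=\widetilde d_n$ must be checked — both hold once $n$ is moderately large because the degree-sum identity above forces $\widetilde d_1$ to be of order $n$), we get $\sigma_1(H)\le \widetilde d_1+3$. Combined with $\sigma_1(H)=\sigma_1(G)>n$, this gives $\widetilde d_1\ge n-2$; so the assumption $\widetilde d_1\le n-2$ collapses to $\widetilde d_1=n-2$ exactly, and I must exclude this single remaining value. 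For that, the bound $\sigma_1(H)\le \widetilde d_1+3$ is too weak (it only yields $\sigma_1(H)\le n+1<n+2$, consistent with Remark~\ref{r3.2}), so a sharper argument is needed: I would pin down the degree sequence of $H$ precisely from the first three spectral-moment identities (with $\widetilde d_1=n-2$ substituted and $\widetilde n_1=\widetilde n_0=0$ forced, or nearly so), identify $H-v_1$ as a disjoint union of paths and at most finitely many cycles plus exactly one extra vertex of degree $\le 1$ in $V(H)\setminus\{v_1\}$ not adjacent to $v_1$, and then use Lemma~\ref{21l} with a carefully chosen induced subgraph (an appropriate star-like or path-like piece together with the vertex $v_1$ of degree $n-2$) to show $\sigma_1(H)$ is strictly below $\sigma_1(G)$. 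The cleanest route is: when $\widetilde d_1=n-2$ there is a vertex $w\ne v_1$ with $w\not\sim v_1$; then $H$ is a subgraph of $K_1\vee F$ for some $F$ on $n-2$ vertices with an isolated vertex, and comparing the relevant quotient matrices (as in Lemmas~\ref{l3.1}, \ref{l3.3}, \ref{P3-lem}) shows the largest root drops below $\sigma_1(G)$, since $\sigma_1(G)$ is monotone in $q$ but here effectively one "free" vertex is wasted.

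The threshold $n\ge 52$ (or $q\ge 12$) is where the bookkeeping bites: it is exactly the size needed to guarantee that the degree-sum and degree-square-sum identities, combined with $\widetilde d_2\le 4$, leave no room for a degree sequence realizing $\sigma_1>n$ when $\widetilde d_1=n-2$ — for small $n$ there are sporadic degree sequences that survive the moment constraints, and one needs $n$ large to kill them uniformly. I expect the main obstacle to be precisely the borderline case $\widetilde d_1=n-2$: the generic Lemma~\ref{23l} bound is not sharp enough, so one must combine the exact degree information extracted from $T_1,T_2,T_3$ with an interlacing/quotient-matrix comparison tailored to the single "missing" edge at $v_1$, and verify that the resulting polynomial inequality holds for all $n\ge 52$. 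A secondary technical point is handling the alternative hypothesis $q\ge 12$ in parallel: there the bound on $\widetilde d_1$ comes not from $n$ being large but from $q$ being large (so that $\sum\widetilde d_i^2$ is forced up), and the same case analysis applies with $q$ playing the role of the large parameter.
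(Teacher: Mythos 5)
Your opening move coincides with the paper's: Lemma~\ref{d1-d2-lem} gives connectivity and $\widetilde d_1\ge n-3$, and Lemma~\ref{23l} gives $n<\sigma_1(H)\le \widetilde d_1+3$, hence $\widetilde d_1\ge n-2$. The genuine gap is in how you dispose of the borderline case $\widetilde d_1=n-2$, which is the entire substance of the lemma. You propose to show $\sigma_1(H)<\sigma_1(G)$ there by ``pinning down the degree sequence'' and then invoking Lemma~\ref{21l} or a quotient-matrix comparison, but you never exhibit the subgraph or the matrix, and it is not clear this can be made to work uniformly: once $\widetilde d_1=n-2$ is substituted into the moment identities, $H$ is forced to have roughly $2(n-q)/3$ vertices of degree $4$, and whether every admissible arrangement of these (together with the one non-neighbour of $v_1$) keeps $\sigma_1(H)$ strictly below $\sigma_1(G)>n$ is exactly the hard question you leave open. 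The bound $\sigma_1\le \widetilde d_1+3=n+1$ is, as you note yourself, useless here, and nothing sharper is actually derived.

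The paper's resolution is different and you are missing its key idea: it does not estimate $\sigma_1(H)$ at all. From the first three moment identities with $d_1=n-1$ and $\widetilde d_1=n-2$ one gets $\widetilde n_2+\widetilde n_3=3$, and in each of the four resulting cases $\widetilde n_1=(n+2q-5-\widetilde n_2)/3$, i.e.\ $H$ has about $(n+2q)/3$ pendant vertices. Since $v_1$ misses only one vertex, at least $\widetilde n_1-1$ of these pendants have $v_1$ as their unique neighbour, so Lemma~\ref{eigenvalue1-lem} forces $\mathrm{mul}_{H}(1)\ge \widetilde n_1-2$. On the other hand, Remark~\ref{r3.2} gives $\mathrm{mul}_{G}(1)=q+c_e(C)\le q+(n-1-2q)/4=(n+2q-1)/4$. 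The inequality $\widetilde n_1-2>(n+2q-1)/4$ holds precisely once $n+2q$ is large enough, which is where the threshold $n\ge 52$ (or $q\ge 12$) comes from --- not, as you guessed, from sporadic degree sequences surviving a $\sigma_1$ estimate. So the missing ingredient is the combination ``moment equations force many pendant vertices at $v_1$'' with the multiplicity-of-$1$ lemma; without it (or a fully worked-out substitute for the $\sigma_1$ comparison), the proof is incomplete.
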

\begin{proof} By Lemma~\ref{d1-d2-lem}, we have $\widetilde{d}_{1}\ge n-3\ge 11$ when $n\ge 14$. By Lemma \ref{23l}, we have $n<\sigma_1(H)\le \widetilde{d}_{1}+3$, and thus $\widetilde{d}_{1}\ge n-2$. For a contradiction, we suppose that $\widetilde{d}_{1}=n-2$.
	
	Lemma \ref{TG-lem}, leads to the system:
	\begin{equation}\label{e51}\left\{
		\begin{array}{ll}
2\widetilde{n}_{1}+2\widetilde{n}_{4}=2n_1+2n_4+d_1(d_1-5)-\widetilde{d}_{1}(\widetilde{d}_{1}-5),\\
\widetilde{n}_{2}-3\widetilde{n}_{4}=n_2-3n_4-d_1(d_1-4)+\widetilde{d}_{1}(\widetilde{d}_{1}-4),\\
2\widetilde{n}_{3}+6\widetilde{n}_{4}=2n_3+6n_4+d_1(d_1-3)-\widetilde{d}_{1}(\widetilde{d}_{1}-3).
		\end{array}
		\right.\end{equation}
	Since $d_1=n-1$, $\widetilde{d}_{1}=n-2$, $n_2=2q$, $n_3=n-1-2q$ and $n_1=n_4=0$, from \eqref{e51}, we have
	\begin{equation}\label{e52}\left\{
		\begin{array}{ll}  \widetilde{n}_{1}=n-4-\widetilde{n}_{4}\\
			               \widetilde{n}_{2}=2q-2n+7+3\widetilde{n}_{4},\\
                            \widetilde{n}_{3}=2n-2q-4-3\widetilde{n}_{4}.
		\end{array}
		\right.\end{equation}
	Now, from \eqref{e52}, we find
	\begin{align*}\label{e53}\widetilde{n}_{2}+\widetilde{n}_{3}=3.
	\end{align*}
	Therefore, only four possible scenarios remain, each of which is considered separately.

	\smallskip\noindent{\textit{Case 1: $\widetilde{n}_{2}=0$ and $\widetilde{n}_{3}=3$.}}  Remark~\ref{r3.2} implies $\mathrm{mul}_G(1)\le q+(n-1-2q)/4=(n+2q-1)/4$, while  by \eqref{e52}, we get $\widetilde{n}_{4}=(2n-2q-7)/3$ and $\widetilde{n}_{1}=(n+2q-5)/3$. Note that $\widetilde{d}_1=n-2$ and $n\ge 2q+7$, since $t\ge 2$. If $n\ge 40$ (or $q\ge 9$),  Lemma~\ref{eigenvalue1-lem} leads to the impossible scenario: $\mathrm{mul}_H(1)\ge \widetilde{n}_{1}-2>(n+2q-1)/4\ge \mathrm{mul}_G(1)$.

\smallskip\noindent{\textit{Case 2: $\widetilde{n}_{2}=1$ and $\widetilde{n}_{3}=2$.}}
Here, \eqref{e52} leads to $\widetilde{n}_{4}=(2n-2q-6)/3$ and $\widetilde{n}_{1}=(n+2q-6)/3$, whereas Lemma~\ref{eigenvalue1-lem} implies $\mathrm{mul}_H(1)\ge \widetilde{n}_{1}-2>(n+2q-1)/4\ge \mathrm{mul}_G(1)$, whenever $n\ge 44$ (or $q\ge 10$).

\smallskip\noindent{\textit{Case 3: $\widetilde{n}_{2}=2$ and $\widetilde{n}_{3}=1$.}}
From $\widetilde{n}_{4}=(2n-2q-5)/3$ and $\widetilde{n}_{1}=(n+2q-7)/3$, we arrive at $\mathrm{mul}_H(1)\ge \widetilde{n}_{1}-2>(n+2q-1)/4\ge \mathrm{mul}_G(1)$ for  $n\ge 48$ (or $q\ge 11$).

\smallskip\noindent{\textit{Case 4: $\widetilde{n}_{2}=3$ and $\widetilde{n}_{3}=0$.}}
Remark~\ref{r3.2} gives $\mathrm{mul}_G(1)\le (n+2q-1)/4$, and~\eqref{e52} gives  $\widetilde{n}_{4}=(2n-2q-4)/3$ and $\widetilde{n}_{1}=(n+2q-8)/3$.  Lemma~\ref{eigenvalue1-lem} eliminates the possibility $n\ge 52$ (or $q\ge 12$).

Therefore, if $n\ge 52$ or $q\ge 12$, then $\widetilde{d}_1=d_1$.
\end{proof}

The next two lemmas are formulated based on straightforward logical reasoning.

\begin{lemma}\label{l5.2} If $n\ge 52$ or $q\ge 12$, then $G$ and $H$ share the same vertex degrees.
\end{lemma}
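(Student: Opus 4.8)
The plan is to follow the same strategy as in Lemma~\ref{l4.1}, now that Lemma~\ref{l5.1} has secured $\widetilde{d}_1 = d_1 = n-1$. Since $H$ is connected (Lemma~\ref{d1-d2-lem}) with maximum degree $n-1$ attained at $v_1$, and $\widetilde{d}_2 \le 4$, every vertex of $V(H)\setminus\{v_1\}$ has degree in $\{1,2,3,4\}$, so $\widetilde{n}_1 + \widetilde{n}_2 + \widetilde{n}_3 + \widetilde{n}_4 = n-1$. First I would record the degree data of $G$: writing $s = s_1 + s_2 + \cdots + s_t$ for the total number of cycle vertices, $G$ has $n_2 = 2q + s - $ (correction: the $q$ edges $K_2$ contribute $2q$ vertices of degree $2$, each cycle vertex has degree $3$ in $G$), so in fact $n_2 = 2q$, $n_3 = s$, $n_1 = n_4 = 0$, with $n = s + 2q + 1$. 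The three moment identities $T_1, T_2, T_3$ from Lemma~\ref{TG-lem}, after cancelling the common term $d_1(d_1-5+\cdots)$ coming from the shared vertex $v_1$ of degree $n-1$, reduce exactly to a linear system in $\widetilde{n}_1,\widetilde{n}_2,\widetilde{n}_3,\widetilde{n}_4$ of the same shape as \eqref{e4.1}:
\begin{equation*}
\left\{
\begin{array}{l}
\widetilde{n}_1 + \widetilde{n}_2 + \widetilde{n}_3 + \widetilde{n}_4 = 2q + s,\\
\widetilde{n}_1 + 2\widetilde{n}_2 + 3\widetilde{n}_3 + 4\widetilde{n}_4 = 4q + 3s,\\
\widetilde{n}_1 + 4\widetilde{n}_2 + 9\widetilde{n}_3 + 16\widetilde{n}_4 = 8q + 9s.
\end{array}
\right.
\end{equation*}

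Next I would solve this system exactly as in the proof of Lemma~\ref{l4.1}: subtracting consecutive equations gives $-\widetilde{n}_1 + \widetilde{n}_3 + 2\widetilde{n}_4 = s$ and $-3\widetilde{n}_1 + 5\widetilde{n}_3 + 12\widetilde{n}_4 = 5s$, whose combination forces $\widetilde{n}_1 + \widetilde{n}_4 = 0$, hence $\widetilde{n}_1 = \widetilde{n}_4 = 0$ (both are non-negative), then $\widetilde{n}_3 = s$ and $\widetilde{n}_2 = 2q$. Thus the degree multiset of $H$ coincides with that of $G$: one vertex of degree $n-1$, $s$ vertices of degree $3$, and $2q$ vertices of degree $2$. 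This is the whole content of the lemma, so the argument is essentially a transcription of Lemma~\ref{l4.1}'s proof with $s$ now denoting the total cycle-vertex count $\sum s_i$.

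I do not anticipate a real obstacle here; the only point requiring care is bookkeeping the cancellation in the $T_2$ and $T_3$ identities. The contribution of the single maximum-degree vertex $v_1$ (degree $n-1$ in both $G$ and $H$ by Lemma~\ref{l5.1}) is identical on both sides of each moment equation, and since $Q$-cospectrality gives $T_q(H) = T_q(G)$ for all $q$, subtracting leaves precisely the system above with all terms involving $d_1$ eliminated. One should also note that only the first three moments are needed for the degree conclusion; the $T_3$ identity's triangle-count term $6\varsigma_H(C_3)$ does not enter because we stop at the third equation of the displayed system (equivalently, the $\widetilde{n}_i^3$-free part of $T_3$), so no assumption about triangles in $H$ is required at this stage. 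Hence the lemma follows for $n \ge 52$ (or $q \ge 12$), the hypothesis being used only through Lemma~\ref{l5.1} to guarantee $\widetilde{d}_1 = n-1$.
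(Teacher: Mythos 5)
Your proposal is correct and follows essentially the same route as the paper: the paper likewise invokes Lemma~\ref{l5.1} to get $\widetilde{d}_1=d_1=n-1$ and then substitutes $n_1=n_4=0$, $n_2=2q$, $n_3=n-1-2q$ into the moment-based linear system \eqref{e51} (equivalent to the first three equations of your system after cancelling the $d_1$ terms), obtaining $\widetilde{n}_1=\widetilde{n}_4=0$, $\widetilde{n}_2=2q$, $\widetilde{n}_3=n-1-2q$. Your observation that the $T_3$/triangle term is not needed here is also consistent with the paper, which only uses the vertex count, $T_1$ and $T_2$ at this step.
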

\begin{proof} Lemma~\ref{l5.1} ensures $\widetilde{d}_1=n-1$, whenever $n\ge 52$ or $q\ge 12$. By inserting $d_1=\widetilde{d}_1$, $n_2=2q$, $n_3=n-1-2q$ and  $n_1=n_4=0$  in~\eqref{e51}, we arrive at
$$\begin{aligned}\label{32e}  \widetilde{n}_1=\widetilde{n}_4=0, \,\,  \widetilde{n}_3=n-1-2q \,\, ~\text{and}\,\, \widetilde{n}_2=2q. \end{aligned}$$
Therefore, $G$ and $H$ are as desired.
\end{proof}

\begin{lemma}\label{l4.2A} For $n \geq 52$, the only structural possibilities for $H$ are:
	\[
	K_1 \vee (C_{s'_1} \cup C_{s'_2} \cup \cdots \cup C_{s'_a} \cup P_{l_1} \cup P_{l_2} \cup \cdots \cup P_{l_q}) \quad \text{and} \quad K_1 \vee (P_{l_1} \cup P_{l_2} \cup \cdots \cup P_{l_q}),
	\]
	where $s'_i \geq 3$, $l_1 \geq l_2 \geq \cdots \geq l_r \geq 3 > l_{r+1} = \cdots = l_q = 2$, with $0 \leq r \leq q$ in the former case and $1 \leq r \leq q$ in the latter.
\end{lemma}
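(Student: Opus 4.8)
The plan is to mimic closely the structure of the proof of Lemma~\ref{l4.2}, now using the degree information supplied by Lemma~\ref{l5.2} together with the refined spectral constraint $\sigma_2(H)=5$ (rather than $\sigma_2(H)<5$, which held in the setting of Theorem~\ref{12t}). First I would invoke Lemma~\ref{l5.2} to conclude that, for $n\ge 52$, the graph $H$ has the same degree sequence as $G$: exactly one vertex of degree $n-1$, exactly $2q$ vertices of degree $2$, and the remaining $n-1-2q$ vertices of degree $3$, with no vertices of degree $1$ or $4$. Fixing a vertex $v_1$ of $H$ with $d_H(v_1)=n-1$ (which is adjacent to everything), the induced subgraph $H-v_1$ then has maximum degree at most $3$ and minimum degree at least $2$, so every component of $H-v_1$ is either a cycle $C_k$ with $k\ge 3$ or a path $P_l$ with $l\ge 2$. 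Since $\widetilde{n}_2=2q$, exactly $q$ of these components are paths of length at least $2$ (i.e.\ $l\ge 2$), and all remaining components are cycles; this already forces the coarse shape asserted in the statement, and the ordering $l_1\ge\cdots\ge l_r\ge 3>l_{r+1}=\cdots=l_q=2$ is merely a relabelling.

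The only remaining point is to justify the claim that the displayed list is \emph{exhaustive}, i.e.\ that no further structural restriction (beyond those already built into the statement, such as $s'_i\ge 3$) is needed, and in particular that the case distinction between the two displayed forms — with a (possibly empty) block of cycles present, versus no cycles at all — covers all possibilities. Here the new ingredient relative to Lemma~\ref{l4.2} is that $\sigma_2(H)=5$ rather than $\sigma_2(H)<5$, so the argument that "at most one component of $H-v_1$ is a cycle'' is \emph{not} available and must not be used. Instead, any number $a\ge 0$ of cyclic components is admissible, which is exactly why the first displayed form allows $C_{s'_1}\cup\cdots\cup C_{s'_a}$ with arbitrary $a$; when $a=0$ this collapses to the second form, with the extra bookkeeping constraint $1\le r\le q$ (at least one path must have length $\ge 3$, since otherwise $H\cong K_1\vee qK_2$ has a different degree sequence, or more to the point $n-1-2q>0$ forces at least one degree-$3$ vertex, which lies on a path of length $\ge 3$ when $a=0$). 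I would therefore split into the two cases $a\ge 1$ and $a=0$ and in each case simply read off the permissible ranges of the parameters from the degree count $n-1-2q$ (the number of degree-$3$ vertices) and $2q$ (the number of degree-$2$ vertices).

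The main obstacle I anticipate is purely a matter of careful accounting rather than a genuine difficulty: one must make sure that the degree-$3$ vertices are distributed consistently between the cyclic part and the long-path part, and that the edge count implied by the degree sequence is compatible — for instance, a path $P_l$ with $l\ge 3$ contributes $l-2$ internal vertices of degree $3$ (in $H$) and two endpoints of degree $2$, while a cycle $C_k$ contributes $k$ vertices all of degree $3$ in $H$; summing these must reproduce $\widetilde{n}_3=n-1-2q$ and $\widetilde{n}_2=2q$, and this is automatically consistent for every choice of the $s'_i$ and $l_j$ satisfying the stated inequalities, so no solutions are lost and none are spuriously introduced. One subtlety worth an explicit remark is the degenerate boundary case $P_2=K_2$, which is why the $l_{r+1}=\cdots=l_q=2$ paths are exactly the copies of $K_2$, matching the form of $G$ itself; and the reason $H\not\cong G$ is \emph{not} yet excluded at this stage — that exclusion is the business of the subsequent proof of Theorem~\ref{13t}, which will apply Corollary~\ref{coro3.4}, Lemma~\ref{PathCycle-lem}, and Lemma~\ref{P3-lem} to the two forms above, exactly as in the proof of Theorem~\ref{12t}.
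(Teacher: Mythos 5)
Your proposal is correct and takes essentially the same route as the paper: Lemma~\ref{l5.2} (together with Lemma~\ref{l5.1}) pins down the degree sequence, which forces every component of $H-v_1$ to be a cycle $C_k$ $(k\ge 3)$ or a path $P_l$ $(l\ge 2)$, and $\widetilde{n}_2=2q$ counts exactly $q$ paths; like the paper, you correctly drop the ``at most one cycle'' restriction from Lemma~\ref{l4.2} because here $\sigma_2(H)=5$. (Only a cosmetic slip: in $H-v_1$ itself the components have maximum degree $2$ and minimum degree $1$ --- the degrees $3$ and $2$ you quote are degrees in $H$ --- but this does not affect the argument.)
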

\begin{proof} Lemma~\ref{l5.1} ensures $\widetilde{d}_1=n-1$ when $n\ge 52$, and we denote the corresponding vertex by $v_1$. In addition, Lemma~\ref{l5.2} implies $\widetilde{d}_2=3$ and $\widetilde{n}_1=0$. These degree conditions lead to the conclusion that every component of  $H-v_1$ is isomorphic to either $C_s~(s\ge 3)$ or $P_l~(l\ge 2)$.  Moreover, $\widetilde{n}_2=2q$ implies that there are exactly $q$ disjoint paths of length at least 2 in  $H-v_1$. Therefore, the statement of the lemma holds because $H$ and $G$  share degrees.
\end{proof}

As above, we denote $$\widetilde{H}\cong K_1\vee (C_{s'_1}\cup C_{s'_2}\cup\cdots \cup C_{s'_a}\cup P_{l_1}\cup P_{l_2}\cup\cdots \cup P_{l_q}) \quad \text{and} \quad H^*\cong K_1\vee (P_{l_1}\cup P_{l_2}\cup\cdots \cup P_{l_q}).$$

\medskip\noindent{\em \textbf{Proof of  Theorem \ref{13t}.}} We suppose that $q\geq 1$ and $n\geq 52$. Let $H$ be as in the beginning of this section. Suppose that $H\not \cong G$. Then by Lemma~\ref{l4.2A}, $H \cong \widetilde{H}$ or $H \cong H^*$.
\vspace*{0.1cm}\\
\smallskip\noindent{\textit{Case 1: $H \cong \widetilde{H}$.}}
By Lemma~\ref{l4.2A}, we have $s'_i\ge 3$ and $l_1\ge l_2\ge \cdots \ge l_r \ge 3>l_{r+1}=\cdots =l_q=2$ $(0\le r \le q)$.

\smallskip\noindent{\textit{Subcase 1.1: $r=0$.}}
 Then $$H \cong \widetilde{H} \cong K_1\vee (C_{s'_1}\cup C_{s'_2}\cup\cdots \cup C_{s'_a}\cup qK_2).$$ Firstly, $a=t$ since $\mathrm{mul}_H(5)=a-1=t-1=\mathrm{mul}_G(5)$ (by Lemma~\ref{l3.1}). Besides, we suppose $s_{1}\ge s_{2} \ge\cdots \ge s_t\ge 3$, $s'_{1}\ge s'_{2} \ge\cdots \ge s'_t\ge 3$. Note that $\max\{3+2\cos\frac{2j\pi}{s_1}\,:\:1\leq j\leq s_1-1\}=3+2\cos\frac{2\pi}{s_1}$, we obtain $s_1=s'_1$ immediately. By excluding the $Q$-eigenvalues $3+2\cos\frac{2j\pi}{s_1},$ $1\leq j\leq s_1-1$, from the common $Q$-spectrum given in Lemma~\ref{l3.1}, we have $s_2=s'_2$. Similarly, we can get $s_i=s'_i$, for all $i$, which implies $H \cong G$.

\smallskip\noindent{\textit{Subcase 1.2: $r\ge 1$ and $l_r \ge 4$.}}
 By employing Corollary \ref{coro3.4}, Lemma \ref{PathCycle-lem} and Remark \ref{PathCycle-r}, we obtain  $$\sigma_1(\widetilde{H})<\sigma_1\big(K_1\vee (C_{s'_1}\cup C_{s'_2}\cup\cdots \cup C_{s'_a}\cup C_{l_1-2}\cup C_{l_2-2}\cup \cdots \cup C_{l_r-2}\cup qK_2)\big)=\sigma_1(G),$$ which contradicts the $Q$-cospectrality.

\smallskip\noindent{\textit{Subcase 1.3: $r\ge 1$ and  $l_r=3$.}}
 By Lemmas \ref{26l} and \ref{P3-lem}, $$\sigma_n(\widetilde{H})\le \sigma_n\big(K_1\vee (P_3\cup C_{s'_1}\cup C_{s'_2}\cup\cdots \cup C_{s'_a}\cup C_{l_1}\cup \cdots C_{l_{r-1}}\cup (q-r)K_2)\big)<1=\sigma_n(G),$$
contrary the same assumption.

\smallskip\noindent{\textit{Case 2: $H \cong H^*$.}}
By Lemma~\ref{l4.2A}, we have $l_1\ge l_2\ge \cdots \ge l_r \ge 3>l_{r+1}=\cdots =l_q=2$ $(1\le r \le q)$, and there are two subcases that correspond to Subcases 1.2 and 1.3 of the previous part.

For $l_r \ge 4$, the same arguments lead to the impossible scenario $\sigma_1(H^*)<\sigma_1\big(K_1\vee (C_{l_1-2}\cup C_{l_2-2}\cup \cdots \cup C_{l_r-2}\cup qK_2)\big)=\sigma_1(G).$

Similarly, $l_r=3$  reveals $\sigma_n(H^*)\le \sigma_n\big(K_1\vee (P_3\cup C_{l_1}\cup \cdots C_{l_{r-1}}\cup (q-r)K_2)\big)<1=\sigma_n(G),$
which is impossible.

The proof is completed.
\qed


\end{document}